\documentclass[a4paper,11pt]{article}
\usepackage[margin=0.9in]{geometry}
\pagestyle{plain}

\usepackage{enumitem}

\usepackage{braket,hyperref}
\usepackage{amssymb,mathtools,amsthm,amsmath}
\usepackage{tikz}

\usepackage{xargs}

\usepackage{verbatim}

\theoremstyle{plain}
\newtheorem{theorem}{\bf Theorem}

\newtheorem{proposition}[theorem]{\bf Proposition}
\newtheorem{corollary}[theorem]{\bf Corollary}
\newtheorem{lemma}[theorem]{\bf Lemma}
\theoremstyle{definition}

\newenvironment{remark}[1][Remark.]{\begin{trivlist}
		\item[\hskip \labelsep {\bfseries #1}]}{\end{trivlist}}

\numberwithin{theorem}{section}
\numberwithin{equation}{section}

\newcommand{\Rea}{{\mathbb R}}

\DeclareMathOperator{\lk}{lk}

\DeclareMathOperator{\mi}{\mathcal{I}}
\DeclareMathOperator{\mj}{\mathcal{J}}

\newcommand{\lapw}[4]{L_{#1}^{#4}(#2;#3)}

\newcommand{\simlapw}[4]{\mathcal{L}_{#1}^{#4}(#2;#3)}
\newcommand{\simlapws}[4]{\hat{\mathcal{L}}_{#1}^{#4}(#2;#3)}

\newcommand{\eigsmallest}[2]{\lambda_{#1}^{\uparrow}\left(#2\right)}
\newcommand{\eiglargest}[2]
{\lambda_{#1}^{\downarrow}\left(#2\right)}

\DeclareMathOperator{\T}{\top}

\newcommand{\mats}[2]{\Rea^{#1\times #2}}
\newcommand{\matsi}[1]{\Rea^{#1\times #1}}

\newcommand{\Xv}[3]{#1_{#2}(#3)}
\newcommand{\wmindeg}[3]{\delta_{#1}(#2;#3)}
\newcommand{\wmaxdeg}[3]{\Delta_{#1}(#2;#3)}

\usepackage{authblk}

\begin{document}

    \title{An eigenvalue interlacing approach to Garland's method}

     \author{Alan Lew\thanks{Dept. Math. Sciences, Carnegie Mellon University, Pittsburgh, PA 15213, USA; 
    Department of Mathematics, Technion, Haifa 32000, Israel. \quad e-mail: \href{mailto:alanlew@technion.ac.il}{alanlew@technion.ac.il}.}}

	\date{}
	\maketitle

\begin{abstract}

Let $X$ be a pure $d$-dimensional simplicial complex. For $0\le k\le d$, let $X(k)$ be the set of $k$-dimensional faces of $X$, let $\tilde{L}_k(X)$ be the $k$-dimensional weighted total Laplacian operator on $X$, and let $\tilde{H}_k(X;\mathbb{R})$ be its $k$-dimensional reduced homology group with real coefficients. For $\sigma\in X$, let $\text{lk}(X,\sigma)$ be the link of $\sigma$ in $X$. For a matrix $M$, we denote by $\text{Spec}(M)$ the multi-set containing all the eigenvalues of $M$. We show that, for every $0\le \ell<k \le d$, 
\[
    \dim(\tilde{H}_k(X;\mathbb{R}))\le \sum_{\eta\in X(\ell)}\left| \left\{ \lambda\in \text{Spec}(\tilde{L}_{k-\ell-1}(\text{lk}(X,\eta))) :\, \lambda\le \frac{(\ell+1)(d-k)}{k+1}\right\}\right|.
\]
This extends the classical vanishing theorem of Garland, corresponding to the special case when the right hand side of the inequality is equal to zero, and a more recent result by Hino and Kanazawa, corresponding to the case $\ell=k-1$. 
A main new ingredient in our proof is an abstract version of Garland's local to global principle, which follows as a simple consequence of the eigenvalue interlacing theorem, and may be of independent interest.
\end{abstract}

\section{Introduction}

For finite sets $\mi,\mj$, we denote by $\mats{\mi}{\mj}$ the set of real valued $|\mi|\times |\mj|$ matrices with rows indexed by the elements of $\mi$ and  columns indexed by the elements of $\mj$. For a diagonalizable matrix $M\in \matsi{\mi}$ with real eigenvalues and $1\le i\le |\mi|$, we denote by $\eigsmallest{i}{M}$ the $i$-th smallest eigenvalue of $M$, and by $\eiglargest{i}{M}$ its $i$-th largest eigenvalue. We denote by $\text{Spec}(M)$ the multi-set consisting of all eigenvalues of $M$ (each one repeated according to its multiplicity). For finite sets $\mi_1,\ldots,\mi_k$ and $M_i\in \matsi{\mi_i}$ for $1\le i\le k$, let $M=\oplus_{i=1}^k M_{i}$ be the direct sum of $M_1,\ldots,M_k$. That is, $M$ is a block diagonal matrix with blocks $M_1,\ldots, M_k$. Note that $\text{Spec}(M)$ is the union of  $\text{Spec}(M_i)$, for $1\le i\le k$.

Let $X$ be a simplicial complex on a finite vertex set $V$. We fix an arbitrary linear order $<$ on $V$.  For $-1\le k\le \dim(X)$, we denote by $X(k)$ the family of $k$-dimensional simplices of $X$.  For $\sigma\in X(k-1)$ and $\tau\in X(k)$ satisfying $\sigma\subset \tau$, we define $(\tau:\sigma)=(-1)^{|\{v\in \tau:\, v<u\}|}$, where $u$ is the unique vertex in $\tau\setminus\sigma$. For $0\le k\le \dim(X)$, the \emph{$k$-th boundary matrix} of $X$ is the matrix $\partial_k(X)\in \Rea^{X(k-1)\times X(k)}$ defined by
\[
    \partial_k(X)_{\sigma,\tau}=\begin{cases}
        (\tau:\sigma) & \text{if } \sigma\subset\tau,\\
        0 & \text{otherwise,}
    \end{cases}
\]
for all $\sigma\in X(k-1)$ and $\tau\in X(k)$.

We say that $X$ is a \emph{pure $d$-dimensional simplicial complex} if all the maximal faces of $X$ are of dimension exactly $d$. For a pure $d$-dimensional simplicial complex $X$, we define a weight function $w:X\to \Rea_{>0}$ by
\[
    w(\sigma) = \left|\left\{ \tau\in X(d):\, \sigma\subset\tau\right\}\right|
\]
for all $\sigma\in X$. For $-1\le k\le d$, let $W_k(X)\in \Rea^{X(k)\times X(k)}$ be the diagonal matrix defined by $W_k(X)_{\sigma,\sigma}=w(\sigma)$ for all $\sigma\in X(k)$. For $0\le k\le d-1$, the \emph{$k$-dimensional weighted total Laplacian operator} on $X$ is the matrix $\tilde{L}_k(X)\in \Rea^{X(k)\times X(k)}$ defined by
\[
    \tilde{L}_k(X)= W_k(X)^{-1} \partial_{k+1}(X) W_{k+1}(X)\partial_{k+1}(X)^{\T} + \partial_k(X)^{\T} W_{k-1}(X)^{-1} \partial_k(X) W_{k}(X).
\]
Similarly, we define $ \tilde{L}_d(X)= \partial_d(X)^{\T} W_{d-1}(X)^{-1} \partial_d(X) W_{d}(X)\in \matsi{X(d)}$.

It is easy to check that, for all $0\le k\le d$,  $\tilde{L}_k(X)$ is diagonalizable, and all its eigenvalues are non-negative real numbers (see Section \ref{sec:higher_laplacians} for more details).  
In \cite{eckmann1945haromic}, Eckmann observed that the $k$-dimensional reduced homology group with real coefficients of $X$, denoted by $\tilde{H}_k(X;\Rea)$, is isomorphic to the kernel of $\tilde{L}_k(X)$. 

For a simplex $\sigma\in X$, let $\lk(X,\sigma)=\{\tau\in X:\, \tau\cap \sigma=\emptyset,\, \tau\cup \sigma\in X\}$ be the \emph{link} of $\sigma$ in $X$. Note that $\lk(X,\sigma)$ is a subcomplex of $X$.
In his seminal work \cite{garland1973padic}, Garland established the following relation between the spectral gap (that is, the smallest eigenvalue) of the $k$-dimensional Laplacian of a pure simplicial complex and the spectral gaps of lower dimensional Laplacian operators on links of its simplices.

\begin{theorem}[Garland \cite{garland1973padic}; see also Papikian {\cite[Corollary 3.15]{papikian2016garland}}]
\label{thm:garland}
       Let $X$ be a pure $d$-dimensional simplicial complex, and let $0\le \ell<k\le d$. Then, 
      \[
       (k-\ell)\cdot \eigsmallest{1}{\tilde{L}_k(X)} \ge (k+1)\cdot \min_{\eta\in X(\ell)} \eigsmallest{1}{\tilde{L}_{k-\ell-1}(\lk(X,\eta))} - (\ell+1)(d-k).
    \]    
\end{theorem}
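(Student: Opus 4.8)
The plan is to run the classical local-to-global argument of Garland: relate a weighted Rayleigh quotient of $\tilde{L}_k(X)$ to those of the link Laplacians $\tilde{L}_{k-\ell-1}(\lk(X,\eta))$, $\eta\in X(\ell)$, via a single exact identity, and then feed this into the variational characterisation of the smallest eigenvalue.

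First I would fix the setup. For $-1\le j\le d$, equip $\Rea^{X(j)}$ with the inner product $\langle f,g\rangle_j=\sum_{\sigma\in X(j)}w(\sigma)f(\sigma)g(\sigma)$; then $\tilde{L}_j(X)$ is self-adjoint for $\langle\cdot,\cdot\rangle_j$, its two summands are positive semidefinite, and one has the pointwise formulas $\langle\tilde{L}^{\mathrm{up}}_j(X)f,f\rangle_j=\sum_{\tau\in X(j+1)}w(\tau)(\mathrm{d}f)(\tau)^2$ and $\langle\tilde{L}^{\mathrm{down}}_j(X)f,f\rangle_j=\sum_{\rho\in X(j-1)}w(\rho)^{-1}\big(\sum_{\sigma\in X(j),\,\sigma\supset\rho}(\sigma:\rho)w(\sigma)f(\sigma)\big)^2$, where $\mathrm{d}=\partial^{\T}$ is the coboundary; in particular $\eigsmallest{1}{\tilde{L}_j(X)}=\min_{f\ne 0}\langle\tilde{L}_j(X)f,f\rangle_j/\langle f,f\rangle_j$, with the minimum attained at an eigenvector (this is essentially the content of the ``higher Laplacians'' section). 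The same holds with $X$ replaced by any $\lk(X,\eta)$, which is again pure, of dimension $d-|\eta|$. I would also record two weight facts: (i) if $\eta\subseteq\sigma$ then $\sigma\setminus\eta$ is a face of $\lk(X,\eta)$ whose weight there equals $w(\sigma)$; and (ii) $\sum_{\tau\in X(k+1),\,\sigma\subset\tau}w(\tau)=(d-k)w(\sigma)$ for $\sigma\in X(k)$. Both follow by counting pairs with a common facet $\rho\in X(d)$.

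Next, for $\eta\in X(\ell)$ I would introduce the restriction map $\Phi_\eta\colon\Rea^{X(k)}\to\Rea^{\lk(X,\eta)(k-\ell-1)}$, $f\mapsto f_\eta$, defined by $f_\eta(\sigma')=\varepsilon(\eta,\sigma')f(\eta\cup\sigma')$ for the standard join sign $\varepsilon(\eta,\sigma')\in\{\pm1\}$; with this convention the coboundary and the weighted boundary in $\lk(X,\eta)$ agree, up to a sign depending only on $\eta$ and on the face in question, with the corresponding operators on $X$ restricted to faces containing $\eta$. The core of the argument is the pair of identities
\[
\sum_{\eta\in X(\ell)}\langle f_\eta,f_\eta\rangle=\binom{k+1}{\ell+1}\langle f,f\rangle,
\]
\[
\sum_{\eta\in X(\ell)}\big\langle\tilde{L}_{k-\ell-1}(\lk(X,\eta))f_\eta,f_\eta\big\rangle=\binom{k}{\ell+1}\langle\tilde{L}_k(X)f,f\rangle+\binom{k}{\ell}(d-k)\langle f,f\rangle.
\]
The first follows from weight fact (i) and the observation that a $k$-face has $\binom{k+1}{\ell+1}$ subsets of size $\ell+1$, all of which are faces of $X$. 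For the second I would split each link Laplacian into its up- and down-parts. The down-part localises cleanly: since $\eta\subseteq\rho\subset\sigma$ forces $\eta\subseteq\sigma$, one gets $\langle\tilde{L}^{\mathrm{down}}f_\eta,f_\eta\rangle=\sum_{\rho\in X(k-1),\,\eta\subseteq\rho}w(\rho)^{-1}\big(\sum_{\sigma\in X(k),\,\sigma\supset\rho}(\sigma:\rho)w(\sigma)f(\sigma)\big)^2$, and summing over $\eta$ (each $\rho\in X(k-1)$ containing $\binom{k}{\ell+1}$ faces of $X(\ell)$) reproduces $\binom{k}{\ell+1}\langle\tilde{L}^{\mathrm{down}}_k(X)f,f\rangle$. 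The up-part is the crux: here $(\mathrm{d}f_\eta)(\tau\setminus\eta)=\pm\big((\mathrm{d}f)(\tau)-\sum_{v\in\eta}(\tau:\tau\setminus v)f(\tau\setminus v)\big)$ for $\eta\subset\tau\in X(k+1)$, so squaring and summing over the $\binom{k+2}{\ell+1}$ faces $\eta\subset\tau$ produces, besides a multiple of $(\mathrm{d}f)(\tau)^2$, cross terms, and a short binomial computation (iterated Pascal identities) collapses the coefficient of $(\mathrm{d}f)(\tau)^2$ to $\binom{k}{\ell+1}$ and leaves $\binom{k}{\ell}\sum_{\sigma\in X(k),\,\sigma\subset\tau}f(\sigma)^2$; summing over $\tau$ with weight $w(\tau)$ and invoking weight fact (ii) converts this leftover into $\binom{k}{\ell}(d-k)\langle f,f\rangle$. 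I expect the binomial bookkeeping in the up-part, together with the sign verification for the maps $\Phi_\eta$, to be the only genuine work; the case $k=d$ is automatically consistent, since both up-parts and the $(d-k)$-term then vanish.

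Finally, I would conclude. Let $f$ be an eigenvector of $\tilde{L}_k(X)$ for $\eigsmallest{1}{\tilde{L}_k(X)}$. Self-adjointness of each link Laplacian gives $\langle\tilde{L}_{k-\ell-1}(\lk(X,\eta))f_\eta,f_\eta\rangle\ge\big(\min_{\eta'\in X(\ell)}\eigsmallest{1}{\tilde{L}_{k-\ell-1}(\lk(X,\eta'))}\big)\langle f_\eta,f_\eta\rangle$ for every $\eta$; summing over $\eta$ and substituting the two identities yields
\[
\binom{k+1}{\ell+1}\Big(\min_{\eta\in X(\ell)}\eigsmallest{1}{\tilde{L}_{k-\ell-1}(\lk(X,\eta))}\Big)\langle f,f\rangle\le\binom{k}{\ell+1}\eigsmallest{1}{\tilde{L}_k(X)}\langle f,f\rangle+\binom{k}{\ell}(d-k)\langle f,f\rangle.
\]
Dividing by $\binom{k}{\ell}\langle f,f\rangle>0$, using $\binom{k+1}{\ell+1}/\binom{k}{\ell}=(k+1)/(\ell+1)$ and $\binom{k}{\ell+1}/\binom{k}{\ell}=(k-\ell)/(\ell+1)$, multiplying through by $\ell+1$, and rearranging gives precisely $(k-\ell)\eigsmallest{1}{\tilde{L}_k(X)}\ge(k+1)\min_{\eta\in X(\ell)}\eigsmallest{1}{\tilde{L}_{k-\ell-1}(\lk(X,\eta))}-(\ell+1)(d-k)$. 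The middle comparison can equivalently be phrased as eigenvalue interlacing for the compression by the near-isometry $f\mapsto(f_\eta)_\eta$, which is presumably the viewpoint developed in the body of the paper.
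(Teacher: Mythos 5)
Your proposal is correct, and it reaches the theorem by the classical Garland--Rayleigh-quotient route rather than the interlacing route the paper takes. The combinatorial core is the same: your second identity, $\sum_{\eta\in X(\ell)}\langle\tilde L_{k-\ell-1}(\lk(X,\eta))f_\eta,f_\eta\rangle=\binom{k}{\ell+1}\langle\tilde L_k(X)f,f\rangle+\binom{k}{\ell}(d-k)\langle f,f\rangle$, is exactly the quadratic-form version of Propositions \ref{prop:lplus} and \ref{prop:lminus} combined (multiply the paper's identities by $\binom{k+1}{\ell+1}/(k+1)$, note $(k-\ell)\binom{k+1}{\ell+1}/(k+1)=\binom{k}{\ell+1}$ and $(\ell+1)\binom{k+1}{\ell+1}/(k+1)=\binom{k}{\ell}$, and use $R_k(X;w)=(d-k)I$ for the pure-complex weighting, as in Section \ref{sec:main:weighted}), and your sign verification for the maps $\Phi_\eta$ is precisely the content of Lemmas \ref{lemma:sign1} and \ref{lemma:sign2}. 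I checked the binomial bookkeeping you flag as the main work: the coefficient of $(\mathrm{d}f)(\tau)^2$ is $\binom{k+2}{\ell+1}-2\binom{k+1}{\ell}+\binom{k}{\ell-1}=\binom{k}{\ell+1}$ and the leftover coefficient is $\binom{k+1}{\ell}-\binom{k}{\ell-1}=\binom{k}{\ell}$, so with your weight facts (i) and (ii) the identity holds, and the final arithmetic does give $(k-\ell)\eigsmallest{1}{\tilde L_k(X)}\ge(k+1)\min_\eta\eigsmallest{1}{\tilde L_{k-\ell-1}(\lk(X,\eta))}-(\ell+1)(d-k)$. Where you diverge from the paper: the paper symmetrizes the Laplacians, packages the same identity as a compression $S^{\T}\bigl(\bigoplus_\eta \hat{\mathcal L}_\eta\bigr)S=M$ with $S^{\T}S=I$, and applies the interlacing Lemma \ref{lemma:abstract_garland} together with Weyl's inequality (Lemma \ref{lemma:weyl}); this yields the inequality for \emph{every} eigenvalue index $i$ (Theorem \ref{thm:garland_plus}) and hence the homology-dimension bound of Corollary \ref{cor:garland_plus}. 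Your argument, testing the link Laplacians against the restrictions $f_\eta$ of a minimizing eigenvector in the weighted inner product, controls only $\eigsmallest{1}{\tilde L_k(X)}$ --- but that is all Theorem \ref{thm:garland} asserts, and for this statement your route is shorter and more self-contained. Your closing observation that the comparison can be recast as interlacing for $f\mapsto(f_\eta)_\eta$ is indeed exactly the paper's viewpoint.
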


The next result, sometimes referred to as ``Garland's vanishing theorem", follows immediately from Theorem \ref{thm:garland}.

\begin{corollary}[Garland \cite{garland1973padic}]\label{cor:garland}
 Let $X$ be a pure $d$-dimensional simplicial complex, and let $0\le \ell<k\le d$. If 
 \[
 \eigsmallest{1}{\tilde{L}_{k-\ell-1}(\lk(X,\eta))} > \frac{(\ell+1)(d-k)}{k+1}
 \]
 for all $\eta\in X(\ell)$, then $\tilde{H}_k(X)=0$.
\end{corollary}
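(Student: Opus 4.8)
The plan is to derive Corollary~\ref{cor:garland} as an immediate consequence of Theorem~\ref{thm:garland}, combined with Eckmann's identification of $\tilde H_k(X;\Rea)$ with $\ker\tilde L_k(X)$. First I would apply Theorem~\ref{thm:garland} to $X$ with the given pair $0\le\ell<k\le d$, which yields
\[
(k-\ell)\cdot\eigsmallest{1}{\tilde L_k(X)} \ge (k+1)\cdot\min_{\eta\in X(\ell)}\eigsmallest{1}{\tilde L_{k-\ell-1}(\lk(X,\eta))} - (\ell+1)(d-k).
\]
Under the hypothesis that $\eigsmallest{1}{\tilde L_{k-\ell-1}(\lk(X,\eta))} > \frac{(\ell+1)(d-k)}{k+1}$ for every $\eta\in X(\ell)$, and since $X(\ell)$ is finite, the minimum on the right-hand side is itself strictly greater than $\frac{(\ell+1)(d-k)}{k+1}$; hence the right-hand side is strictly positive. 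Dividing through by $k-\ell\ge 1$ gives $\eigsmallest{1}{\tilde L_k(X)} > 0$.

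Next I would recall that $\tilde L_k(X)$ is diagonalizable with non-negative real eigenvalues (as noted after its definition and elaborated in Section~\ref{sec:higher_laplacians}), so $\eigsmallest{1}{\tilde L_k(X)}$ is exactly the least eigenvalue of $\tilde L_k(X)$; its strict positivity therefore means $0$ is not an eigenvalue, i.e.\ $\ker\tilde L_k(X)=0$. Finally, invoking Eckmann's observation \cite{eckmann1945haromic} that $\tilde H_k(X;\Rea)\cong\ker\tilde L_k(X)$, we conclude $\tilde H_k(X;\Rea)=0$, as claimed.

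There is essentially no substantial obstacle here: all of the work is already contained in Theorem~\ref{thm:garland}. The only points that require (minor) care are ensuring $k-\ell\ge 1$ so that dividing by $k-\ell$ preserves the strict inequality, dealing with the degenerate case in which $X$ has no $\ell$-dimensional faces (then $X$ also has no $k$-dimensional faces and $\tilde H_k(X;\Rea)=0$ holds trivially, so the statement is vacuous), and making explicit that the passage from the eigenvalue bound to the vanishing of homology relies on the non-negativity of the spectrum of $\tilde L_k(X)$ together with the Eckmann isomorphism.
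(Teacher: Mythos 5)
Your proof is correct and is exactly the argument the paper intends: Corollary~\ref{cor:garland} is stated as an immediate consequence of Theorem~\ref{thm:garland}, obtained by noting that the hypothesis forces $\eigsmallest{1}{\tilde{L}_k(X)}>0$ and then invoking Eckmann's identification $\tilde{H}_k(X;\Rea)\cong\Ker\tilde{L}_k(X)$ (Lemma~\ref{lemma:eckmann}). Your extra remarks on $k-\ell\ge 1$ and the non-negativity of the spectrum are fine but not needed beyond what the paper already records.
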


Since its introduction, many extensions and variants of Garland's technique were developed, with applications to diverse fields such as group theory, matching theory, the theory of random simplicial complexes, and the study of convergence of random walks (see, for example, \cite{ballmann1997l2,zuk1996propriete,aharoni2005eigenvalues, gundert2016eigenvalues, hoffman2021spectral, kahle2014sharp,  oppenheim2018local, kaufman2020high, anari2024log,Abdolazimi2021matrix}). 
In this paper, we prove the following generalization of Theorem \ref{thm:garland}.

\begin{theorem}\label{thm:garland_plus}
    Let $X$ be a pure $d$-dimensional simplicial complex, and let $0\le \ell<k\le d$. Then, for all $1\le i\le |X(k)|$, 
      \[
       (k-\ell)\cdot \eigsmallest{i}{\tilde{L}_k(X)} \ge (k+1)\cdot \eigsmallest{i}{\bigoplus_{\eta\in X(\ell)} \tilde{L}_{k-\ell-1}(\lk(X,\eta))} - (\ell+1)(d-k).
    \]
\end{theorem}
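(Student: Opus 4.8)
The plan is to realize both sides of the inequality as ordered eigenvalues of explicit matrices (or quadratic forms) living on a common space, and then apply the eigenvalue interlacing theorem — this is presumably the ``abstract local to global principle'' advertised in the abstract. First I would recall the combinatorial structure underlying Garland's method: a $k$-cochain $f$ on $X$, restricted to the link $\lk(X,\eta)$ of an $\ell$-face $\eta$, becomes a $(k-\ell-1)$-cochain $f_\eta$ on that link, and the weights are compatible in the sense that summing the link-level inner products over $\eta \in X(\ell)$ recovers (a constant multiple of) the global weighted inner product on $C^k(X)$. Concretely, I expect the localization map $r\colon C^k(X) \to \bigoplus_{\eta\in X(\ell)} C^{k-\ell-1}(\lk(X,\eta))$, $f \mapsto (f_\eta)_\eta$, to be injective with $\langle r f, r g\rangle = c\,\langle f,g\rangle$ for an explicit constant $c$ depending on $k,\ell$ (counting the number of $\ell$-faces contained in a given $k$-face, weighted appropriately), so that $r$ is a scaled isometric embedding.

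The second step is the key identity: I would express the global Rayleigh quotient $\langle \tilde{L}_k(X) f, f\rangle$ in terms of the link Laplacians. Garland's classical computation decomposes the ``up'' part $\langle \partial_{k+1} W_{k+1}\partial_{k+1}^\T W_k^{-1} f, f\rangle$ as a sum over $\ell$-faces $\eta$ of the corresponding up-term for $f_\eta$ on $\lk(X,\eta)$, and handles the ``down'' part by a similar but more delicate accounting, with the cross terms producing exactly the defect $(\ell+1)(d-k)$ and the normalization producing the factors $(k-\ell)$ and $(k+1)$. The upshot should be an operator inequality of the form
\[
(k-\ell)\,\langle \tilde{L}_k(X) f, f\rangle \;\ge\; \frac{k+1}{c}\sum_{\eta\in X(\ell)} \langle \tilde{L}_{k-\ell-1}(\lk(X,\eta)) f_\eta, f_\eta\rangle \;-\; (\ell+1)(d-k)\,\langle f,f\rangle
\]
for all $f$, i.e. $(k-\ell)\tilde{L}_k(X) \succeq \frac{k+1}{c} r^\T \big(\bigoplus_\eta \tilde{L}_{k-\ell-1}(\lk(X,\eta))\big) r - (\ell+1)(d-k)I$ after passing to a self-adjoint model (working in the weighted inner product, or conjugating by $W_k(X)^{1/2}$ so all operators become genuinely symmetric). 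This inequality in the case $i=1$ already recovers Theorem \ref{thm:garland}; the point is to extract the statement for all $i$.

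The third step is the interlacing argument. Since $r$ is (up to the scalar $\sqrt c$) an isometric embedding of $C^k(X)$ into $\bigoplus_\eta C^{k-\ell-1}(\lk(X,\eta))$, the compression $\frac1c\, r^\T \big(\bigoplus_\eta \tilde{L}_{k-\ell-1}(\lk(X,\eta))\big) r$ is a principal-submatrix-type restriction of the big block-diagonal operator to the subspace $\Ima(r)$, so by the Cauchy interlacing theorem its $i$-th smallest eigenvalue is at least $\eigsmallest{i}{\bigoplus_{\eta\in X(\ell)} \tilde{L}_{k-\ell-1}(\lk(X,\eta))}$. Combining this with the operator inequality from step two (using Weyl's monotonicity: adding $-(\ell+1)(d-k)I$ shifts all eigenvalues, and $A \succeq B \Rightarrow \eigsmallest{i}{A}\ge \eigsmallest{i}{B}$) yields
\[
(k-\ell)\,\eigsmallest{i}{\tilde{L}_k(X)} \ge (k+1)\,\eigsmallest{i}{\bigoplus_{\eta\in X(\ell)} \tilde{L}_{k-\ell-1}(\lk(X,\eta))} - (\ell+1)(d-k),
\]
as desired. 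I would state and prove the abstract interlacing lemma (isometric embedding plus operator inequality $\Rightarrow$ eigenvalue inequality for all indices) separately, since it is clean and reusable, and then the theorem becomes a matter of verifying its hypotheses.

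The main obstacle I anticipate is bookkeeping in step two: getting the down-Laplacian contribution right, tracking the weights $w(\sigma)$ through the localization, and pinning down the constant $c$ and the precise coefficients $(k-\ell)$, $(k+1)$, $(\ell+1)(d-k)$. The known literature (Garland, and the Hino–Kanazawa case $\ell = k-1$) essentially does the $i=1$ version of this computation, so the work is to reorganize it as an operator inequality on the whole space rather than a Rayleigh-quotient inequality at the bottom of the spectrum — plus being careful that the interlacing is applied in a fixed inner product in which every operator involved is self-adjoint (conjugating the total Laplacians by the appropriate $W^{1/2}$ to symmetrize them, and checking that localization intertwines these symmetrizations up to scaling).
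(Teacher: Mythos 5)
Your proposal is correct and follows essentially the same route as the paper: your scaled isometric localization map combined with Cauchy interlacing is exactly Lemma \ref{lemma:abstract_garland}, and the bookkeeping you defer in step two is carried out in Propositions \ref{prop:lplus} and \ref{prop:lminus}, which show that your anticipated operator inequality is in fact an exact identity (for pure complexes the diagonal error term is constantly $d-k$). The final combination with Weyl's inequality (Lemma \ref{lemma:weyl}) to absorb the $(\ell+1)(d-k)$ shift, after symmetrizing by $W_k(X)^{1/2}$, is precisely how the paper concludes.
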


As a consequence, we obtain the following extension of Corollary \ref{cor:garland}.

\begin{corollary}\label{cor:garland_plus}
     Let $X$ be a pure $d$-dimensional simplicial complex, and let $0\le \ell<k\le d$. Then,
\[
   \dim(\tilde{H}_k(X;\Rea)) \le \sum_{\eta\in X(\ell)} \left| \left\{ \lambda\in {\rm Spec}(\tilde{L}_{k-\ell-1}(\lk(X,\eta))):\, \lambda\le  \frac{(\ell+1)(d-k)}{k+1}\right\}\right|.
\]
\end{corollary}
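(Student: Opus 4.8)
The plan is to deduce the corollary directly from Theorem \ref{thm:garland_plus} together with Eckmann's identification of $\tilde H_k(X;\Rea)$ with $\ker \tilde L_k(X)$. Set $m = \dim(\tilde H_k(X;\Rea)) = \dim \ker \tilde L_k(X)$. Since $\tilde L_k(X)$ is diagonalizable with non-negative real eigenvalues, its $m$ smallest eigenvalues all vanish, i.e. $\eigsmallest{i}{\tilde L_k(X)} = 0$ for every $1 \le i \le m$. (If $m = 0$ there is nothing to prove, since the right-hand side is a sum of non-negative integers.)

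Now apply Theorem \ref{thm:garland_plus} with $i = m$. Writing $L := \bigoplus_{\eta\in X(\ell)} \tilde L_{k-\ell-1}(\lk(X,\eta))$, we get
\[
0 \;=\; (k-\ell)\cdot \eigsmallest{m}{\tilde L_k(X)} \;\ge\; (k+1)\cdot \eigsmallest{m}{L} - (\ell+1)(d-k),
\]
and since $k+1 > 0$ this rearranges to $\eigsmallest{m}{L} \le \frac{(\ell+1)(d-k)}{k+1}$. In other words, the $m$-th smallest eigenvalue of $L$ does not exceed the threshold $t := \frac{(\ell+1)(d-k)}{k+1}$, which forces $L$ to have at least $m$ eigenvalues (with multiplicity) that are $\le t$. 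Here I should note that $L$ is diagonalizable with real eigenvalues — this follows because each summand $\tilde L_{k-\ell-1}(\lk(X,\eta))$ is the total Laplacian of the pure $(d-\ell-1)$-dimensional complex $\lk(X,\eta)$ (each link of an $\ell$-face of the pure $d$-complex $X$ is pure of dimension $d-\ell-1$), hence diagonalizable with non-negative real eigenvalues by the remarks in the excerpt, and a direct sum of such matrices is again diagonalizable with real spectrum.

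Finally, translate the inequality $\eigsmallest{m}{L} \le t$ into the count on the right-hand side of the corollary. The number of eigenvalues of $L$ (with multiplicity) that are $\le t$ is exactly $\big|\{\lambda \in \mathrm{Spec}(L) : \lambda \le t\}\big|$, and $\eigsmallest{m}{L} \le t$ says precisely that this count is at least $m$. Since $\mathrm{Spec}(L)$ is the disjoint union of the $\mathrm{Spec}(\tilde L_{k-\ell-1}(\lk(X,\eta)))$ over $\eta \in X(\ell)$, we have
\[
\big|\{\lambda \in \mathrm{Spec}(L) : \lambda \le t\}\big| \;=\; \sum_{\eta\in X(\ell)} \big|\{\lambda \in \mathrm{Spec}(\tilde L_{k-\ell-1}(\lk(X,\eta))) : \lambda \le t\}\big|,
\]
and combining this with $m \le \big|\{\lambda \in \mathrm{Spec}(L) : \lambda \le t\}\big|$ and $m = \dim \tilde H_k(X;\Rea)$ yields the claim. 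There is no real obstacle here — the content is entirely in Theorem \ref{thm:garland_plus}; the only points requiring a line of justification are that links of faces in a pure complex are again pure (so that their total Laplacians are defined and have the stated spectral properties) and the bookkeeping that an eigenvalue bound on $\eigsmallest{m}{}$ is equivalent to a lower bound on the number of small eigenvalues.
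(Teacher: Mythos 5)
Your proposal is correct and follows essentially the same route as the paper: Corollary \ref{cor:garland_plus} is deduced from Theorem \ref{thm:garland_plus} together with Eckmann's identification of $\tilde{H}_k(X;\Rea)$ with $\ker \tilde{L}_k(X)$, plus the observation that the spectrum of the direct sum splits over the links. The only cosmetic difference is the direction of the bookkeeping: the paper sets $m$ equal to the spectral count and applies the theorem at index $m+1$ to get $\eigsmallest{m+1}{\tilde{L}_k(X)}>0$, whereas you set $m=\dim\tilde{H}_k(X;\Rea)$ and apply it at index $m$; the two are equivalent.
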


The case $\ell=k-1$ of Corollary \ref{cor:garland_plus} was proved by Hino and Kanazawa in \cite[Theorem 2.5]{hino2019asymptotic}, using different methods. A main new ingredient in our proof is the following ``local to global principle" for matrices, which is a simple consequence of the eigenvalue interlacing theorem.

\begin{lemma}\label{lemma:abstract_garland}
    Let $\mi$ be a finite set, and let $\mi_1,\ldots, \mi_m\subset \mi$. For $1\leq i\leq m$, let $M_i\in \matsi{\mi_i}$ be a symmetric matrix  and let $s_i: \mi_i\to \Rea$ such that $\sum_{i:\, \sigma\in \mi_i} s_i(\sigma)^2=1$ for all $\sigma\in\mi$.
    Define $M\in \matsi{\mi}$ by
    \[
        M_{\sigma,\tau} = \sum_{\substack{i\in\{1,\ldots,m\}:\\\sigma,\tau\in \mi_i}} s_i(\sigma) s_i(\tau) (M_i)_{\sigma,\tau}     
    \]
    for all $\sigma,\tau\in \mi$. Then, for all $1\leq k\leq |\mi|$,
    \[
     \eigsmallest{k}{M} \geq  \eigsmallest{k}{\bigoplus_{i=1}^m M_i},
    \]
    and
    \[
     \eiglargest{k}{M} \leq  \eiglargest{k}{\bigoplus_{i=1}^m M_i}.
    \]
\end{lemma}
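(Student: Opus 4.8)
The plan is to realize $M$ as (a principal submatrix of, or better, a direct compression of) the block-diagonal matrix $\bigoplus_{i=1}^m M_i$ via an isometry, and then apply the Cauchy eigenvalue interlacing theorem. Concretely, let $\mj=\bigsqcup_{i=1}^m \mi_i$ be the disjoint union indexing the rows and columns of $N:=\bigoplus_{i=1}^m M_i$, so that a coordinate of $\Rea^{\mj}$ is a pair $(i,\sigma)$ with $\sigma\in\mi_i$. Define a linear map $U\colon\Rea^{\mi}\to\Rea^{\mj}$ by $(Ux)_{(i,\sigma)}=s_i(\sigma)\,x_\sigma$; equivalently $U$ sends the standard basis vector $e_\sigma$ to $\sum_{i:\,\sigma\in\mi_i} s_i(\sigma)\,e_{(i,\sigma)}$. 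The hypothesis $\sum_{i:\,\sigma\in\mi_i} s_i(\sigma)^2=1$ says exactly that $\|Ue_\sigma\|^2=1$, and since the images $Ue_\sigma$ for distinct $\sigma$ have disjoint supports in $\mj$, they are orthonormal; hence $U$ is an isometry, i.e.\ $U^{\T}U=I_{\mi}$.

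The key computation is that $U^{\T} N U = M$. Indeed, for $\sigma,\tau\in\mi$,
\[
(U^{\T}NU)_{\sigma,\tau}=\sum_{\substack{i,j\\ \sigma\in\mi_i,\ \tau\in\mi_j}} s_i(\sigma)\,N_{(i,\sigma),(j,\tau)}\,s_j(\tau),
\]
and $N$ is block diagonal, so $N_{(i,\sigma),(j,\tau)}=0$ unless $i=j$, in which case it equals $(M_i)_{\sigma,\tau}$; the sum collapses to $\sum_{i:\,\sigma,\tau\in\mi_i} s_i(\sigma)s_i(\tau)(M_i)_{\sigma,\tau}$, which is precisely $M_{\sigma,\tau}$ by definition. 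So $M=U^{\T}NU$ with $U$ an isometry and $N$ symmetric (as a direct sum of symmetric matrices).

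It then remains to invoke the standard fact that if $U$ is an isometry and $N$ is symmetric, the eigenvalues of the compression $U^{\T}NU$ interlace those of $N$: for all $1\le k\le |\mi|$,
\[
\eigsmallest{k}{N}\le \eigsmallest{k}{U^{\T}NU}\quad\text{and}\quad \eiglargest{k}{U^{\T}NU}\le \eiglargest{k}{N}.
\]
This is immediate from the Courant–Fischer min-max characterization: for a $k$-dimensional subspace $S\subseteq\Rea^{\mi}$, the image $US$ is a $k$-dimensional subspace of $\Rea^{\mj}$ (as $U$ is injective) on which the Rayleigh quotient of $N$ agrees with that of $M$ on $S$, namely $\tfrac{\langle Nx',x'\rangle}{\langle x',x'\rangle}=\tfrac{\langle Mx,x\rangle}{\langle x,x\rangle}$ for $x'=Ux$; taking min over $S$ of the max (resp.\ max over $S$ of the min) of the Rayleigh quotient and using that $US$ ranges over a subfamily of all $k$-dimensional subspaces of $\Rea^{\mj}$ gives the two inequalities. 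Since $\mathrm{Spec}(N)=\bigcup_{i=1}^m\mathrm{Spec}(M_i)$ as noted in the introduction, this is exactly the claimed bound, completing the proof.

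I do not expect a serious obstacle here: the only points requiring care are checking that $U$ is genuinely an isometry (which is where the normalization hypothesis $\sum_i s_i(\sigma)^2=1$ enters, together with the disjointness of supports coming from the disjoint-union indexing), and being careful that the two interlacing inequalities point in the right directions — the compression can only shrink the spectrum inward, so smallest eigenvalues go up and largest go down. If one prefers to avoid introducing $U$ explicitly, the same argument can be phrased purely in terms of Courant–Fischer applied directly to $M$ and $\bigoplus_i M_i$, substituting the test vector $x\mapsto(s_i(\sigma)x_\sigma)_{i,\sigma}$, but the isometry formulation makes the bookkeeping cleanest.
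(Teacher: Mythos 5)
Your proof is correct and is essentially the paper's own argument: your isometry $U$ is exactly the matrix $S\in\mats{\mj}{\mi}$ with $S_{(i,\sigma),\tau}=s_i(\sigma)$ for $\sigma=\tau$ that the paper uses, and the identities $S^{\T}S=I_{|\mi|}$, $S^{\T}\bigl(\bigoplus_i M_i\bigr)S=M$ followed by the generalized Cauchy interlacing theorem (Theorem \ref{thm:cauchy_plus}) are the same steps. The only difference is that you re-derive the interlacing inequalities from Courant--Fischer, whereas the paper simply cites them.
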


\begin{remark}
In recent work \cite{babson2023homological}, Babson and Welker presented a new variant of Garland's vanishing theorem, which extends beyond the setting of simplicial complexes to a broader family of posets (including, in particular, cubical complexes). A key ingredient in their proof is the observation that the chain complex of a simplicial complex can be obtained as the projection of the direct sum of certain chain complexes associated to the links of its simplices. This parallels, in a sense, the situation in Lemma \ref{lemma:abstract_garland}, and served as a motivation for our work.
Let us also mention our recent work \cite{lew2024laplacian}, where we used eigenvalue interlacing in a different way to extend a ``global" version of Garland's method due to Aharoni, Berger, and Meshulam \cite{aharoni2005eigenvalues}.
\end{remark}

This paper is organized as follows. In Section \ref{sec:prel}, we present background material on matrix eigenvalues and on high dimensional Laplacian operators, which we will later use. Section \ref{sec:abstract_garland} contains the proof of Lemma \ref{lemma:abstract_garland}.  In Section \ref{sec:main}, we present the proofs of our main results, Theorem \ref{thm:garland_plus} and Corollary \ref{cor:garland_plus}. In fact, we first state and prove more general versions of these results, valid for simplicial complexes with an arbitrary weight function, and then, in Sections \ref{sec:main:unweighted} and \ref{sec:main:weighted}, we show how they specialize to the cases of unweighted Laplacians, and of weighted Laplacians on pure simplicial complexes, respectively. In addition, in Section \ref{sec:main:upper}, we present an upper bound analogous to the lower bound in Theorem \ref{thm:garland_plus} (Proposition \ref{prop:garland_upper}), which follows by the same arguments.

\section{Preliminaries}\label{sec:prel}

\subsection{Matrix eigenvalues}

Recall that for finite sets $\mi,\mj$, we denote by $\mats{\mi}{\mj}$ the set of real valued $|\mi|\times |\mj|$ matrices with rows indexed by the elements of $\mi$ and  columns indexed by the elements of $\mj$. For a diagonalizable matrix $M\in \matsi{\mi}$ with real eigenvalues and $1\le i\le |\mi|$, we denote by $\eigsmallest{i}{M}$ the $i$-th smallest eigenvalue of $M$, and by $\eiglargest{i}{M}$ its $i$-th largest eigenvalue. We denote the $n\times n$ identity matrix by $I_n$.
We will need the following well-known generalization of Cauchy's interlacing theorem.

\begin{theorem}[See, for example, {\cite[Theorem 2.5.1]{brouwer2012book}}]
\label{thm:cauchy_plus}
Let $\mi,\mj$ be finite sets satisfying $|\mi|\leq |\mj|$.
Let $A\in \matsi{\mj}$ be a symmetric matrix and let $S\in \mats{\mj}{\mi}$ such that $S^{\T} S =I_{|\mi|}$. Let $B=S^{\T} A S$. Then, for all $1\leq i\leq |\mi|$,
\[
    \eigsmallest{i}{B} \geq \eigsmallest{i}{A}
\]
and
\[
    \eiglargest{i}{B} \leq \eiglargest{i}{A}.
\]
\end{theorem}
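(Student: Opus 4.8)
\textbf{Proof plan for Lemma \ref{lemma:abstract_garland}.}

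The plan is to realize the matrix $M$ as a compression $S^{\T} A S$ of the block-diagonal matrix $A = \bigoplus_{i=1}^m M_i$ by an isometry $S$, and then apply Theorem \ref{thm:cauchy_plus} directly. The ambient index set for $A$ is the disjoint union $\mj = \bigsqcup_{i=1}^m \mi_i$, whose elements I will write as pairs $(i,\sigma)$ with $\sigma \in \mi_i$; thus $A_{(i,\sigma),(j,\tau)} = (M_i)_{\sigma,\tau}$ when $i = j$ and $0$ otherwise. Define $S \in \mats{\mj}{\mi}$ by $S_{(i,\sigma),\rho} = s_i(\sigma)$ if $\rho = \sigma$ and $0$ otherwise. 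Note $|\mj| = \sum_i |\mi_i| \ge |\mi|$ since every $\sigma \in \mi$ lies in at least one $\mi_i$ (because $\sum_{i:\,\sigma\in\mi_i} s_i(\sigma)^2 = 1 \neq 0$), so the hypothesis $|\mi| \le |\mj|$ of Theorem \ref{thm:cauchy_plus} holds.

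The two things to verify are that $S^{\T}S = I_{|\mi|}$ and that $S^{\T}AS = M$. For the first: $(S^{\T}S)_{\rho,\tau} = \sum_{(i,\sigma)\in\mj} S_{(i,\sigma),\rho} S_{(i,\sigma),\tau}$, and the only nonzero terms occur when $\sigma = \rho = \tau$, giving $\sum_{i:\,\rho\in\mi_i} s_i(\rho)^2 = 1$ when $\rho = \tau$ by hypothesis, and $0$ when $\rho \neq \tau$; hence $S^{\T}S = I_{|\mi|}$. For the second: expanding,
\[
(S^{\T}AS)_{\sigma,\tau} = \sum_{(i,\sigma'),(j,\tau')} S_{(i,\sigma'),\sigma}\, A_{(i,\sigma'),(j,\tau')}\, S_{(j,\tau'),\tau}.
\]
The factor $S_{(i,\sigma'),\sigma}$ forces $\sigma' = \sigma$, the factor $S_{(j,\tau'),\tau}$ forces $\tau' = \tau$, and $A_{(i,\sigma),(j,\tau)} \neq 0$ forces $i = j$; so the sum collapses to $\sum_{i:\,\sigma,\tau\in\mi_i} s_i(\sigma)\, (M_i)_{\sigma,\tau}\, s_i(\tau)$, which is exactly $M_{\sigma,\tau}$. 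Also $A = \bigoplus_i M_i$ is symmetric since each $M_i$ is, so Theorem \ref{thm:cauchy_plus} applies with this $A$ and $S$. It yields $\eigsmallest{k}{M} = \eigsmallest{k}{S^{\T}AS} \ge \eigsmallest{k}{A} = \eigsmallest{k}{\bigoplus_i M_i}$ and $\eiglargest{k}{M} \le \eiglargest{k}{\bigoplus_i M_i}$ for all $1 \le k \le |\mi|$, as desired. (Implicitly $M$ is symmetric, which one sees either directly from the defining formula or from $M = S^{\T}AS$ with $A$ symmetric.)

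I do not anticipate a genuine obstacle here; the lemma is essentially a bookkeeping repackaging of the interlacing theorem. The only point requiring a little care is the precise construction of the ambient space $\mj$ as a \emph{disjoint} union (an element $\sigma$ belonging to several $\mi_i$ must be split into distinct copies $(i,\sigma)$), and keeping the indices straight when checking $S^{\T}S = I$ and $S^{\T}AS = M$ — in particular, using that the condition $\sum_{i:\,\sigma\in\mi_i}s_i(\sigma)^2 = 1$ is exactly what makes the columns of $S$ orthonormal.
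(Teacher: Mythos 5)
Careful: the statement you were assigned is Theorem \ref{thm:cauchy_plus} (the generalized Cauchy interlacing theorem), but what you wrote is a proof of Lemma \ref{lemma:abstract_garland} that \emph{invokes} Theorem \ref{thm:cauchy_plus} as a black box. With respect to the assigned statement this is a genuine gap: the inequalities $\eigsmallest{i}{S^{\T}AS}\ge\eigsmallest{i}{A}$ and $\eiglargest{i}{S^{\T}AS}\le\eiglargest{i}{A}$ are exactly what you appeal to, never what you establish, so nothing of the quoted theorem is proved (the paper itself gives no proof either, citing Brouwer--Haemers). An actual proof needs an argument such as Courant--Fischer: since $S^{\T}S=I_{|\mi|}$, the map $x\mapsto Sx$ is injective and norm-preserving, so for any $i$-dimensional subspace $U\subset\Rea^{\mi}$ the image $SU$ is an $i$-dimensional subspace of $\Rea^{\mj}$, and $x^{\T}Bx=(Sx)^{\T}A(Sx)$, $x^{\T}x=(Sx)^{\T}(Sx)$; the min--max characterization then gives $\eigsmallest{i}{B}\ge\eigsmallest{i}{A}$, and the max--min characterization gives $\eiglargest{i}{B}\le\eiglargest{i}{A}$. (Equivalently, extend the columns of $S$ to an orthonormal basis of $\Rea^{\mj}$ and apply classical Cauchy interlacing to $B$ viewed as a principal submatrix of an orthogonal conjugate of $A$.)

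That said, the material you did write is correct, and as a proof of Lemma \ref{lemma:abstract_garland} it coincides with the paper's own: the same ambient index set $\mj=\{(i,\sigma):\ \sigma\in\mi_i\}$, the same matrix $S$ with entries $s_i(\sigma)$, the same verifications $S^{\T}S=I_{|\mi|}$ (using $\sum_{i:\,\sigma\in\mi_i}s_i(\sigma)^2=1$) and $S^{\T}\bigl(\bigoplus_i M_i\bigr)S=M$, followed by Theorem \ref{thm:cauchy_plus}. So if the intended target had been the lemma, your proposal would match the paper essentially verbatim; as a proof of the interlacing theorem itself, it is off-target and circular.
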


We will also use the following inequalities due to Weyl.
\begin{lemma}[See, for example, {\cite[Thm 2.8.1]{brouwer2012book}}]\label{lemma:weyl}
Let $A, B$ be real symmetric matrices of size $n\times n$. Then, for all $1\leq i\leq n$,
\[
   \eigsmallest{i}{A+B}\geq \eigsmallest{i}{A}+\eigsmallest{1}{B},
\]
and
\[
 \eiglargest{i}{A+B} \leq \eiglargest{i}{A}+\eiglargest{1}{B}.
\]
\end{lemma}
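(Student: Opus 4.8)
The plan is to deduce both inequalities from Theorem \ref{thm:cauchy_plus} by exhibiting $M$ as a compression $S^{\T}(\bigoplus_i M_i)S$ of the block-diagonal matrix $A=\bigoplus_{i=1}^m M_i$ for a suitable isometry $S$. First I would fix notation: the index set of $A$ is the disjoint union $\mathcal{J}=\bigsqcup_{i=1}^m \mi_i$, with elements written as pairs $(i,\sigma)$ for $\sigma\in\mi_i$; note $|\mathcal{J}|=\sum_i|\mi_i|\ge|\mi|$ since each $\sigma\in\mi$ lies in at least one $\mi_i$ (because $\sum_{i:\sigma\in\mi_i}s_i(\sigma)^2=1$ forces the sum to be nonempty). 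Then I would define $S\in\mats{\mathcal{J}}{\mi}$ by $S_{(i,\sigma),\tau}=s_i(\sigma)$ if $\sigma=\tau$ and $0$ otherwise; that is, the column of $S$ indexed by $\tau\in\mi$ has entry $s_i(\tau)$ in row $(i,\tau)$ for each $i$ with $\tau\in\mi_i$, and zeros elsewhere.

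The key computation is $S^{\T}S=I_{|\mi|}$ and $S^{\T}AS=M$. For the first: the $(\tau,\tau')$ entry of $S^{\T}S$ is $\sum_{(i,\sigma)\in\mathcal{J}}S_{(i,\sigma),\tau}S_{(i,\sigma),\tau'}$, which vanishes unless $\tau=\tau'$ (the only nonzero rows for column $\tau$ have second coordinate $\tau$), and equals $\sum_{i:\tau\in\mi_i}s_i(\tau)^2=1$ when $\tau=\tau'$, by hypothesis. For the second: since $A$ is block diagonal with $(A)_{(i,\sigma),(j,\sigma')}=(M_i)_{\sigma,\sigma'}$ when $i=j$ (and $\sigma,\sigma'\in\mi_i$) and $0$ otherwise, the $(\sigma,\tau)$ entry of $S^{\T}AS$ is
\[
\sum_{i,j}\sum_{\substack{\alpha\in\mi_i\\ \beta\in\mi_j}} S_{(i,\alpha),\sigma}\,(A)_{(i,\alpha),(j,\beta)}\,S_{(j,\beta),\tau}
=\sum_{i:\,\sigma,\tau\in\mi_i} s_i(\sigma)\,(M_i)_{\sigma,\tau}\,s_i(\tau),
\]
using that $S_{(i,\alpha),\sigma}$ forces $\alpha=\sigma$, $S_{(j,\beta),\tau}$ forces $\beta=\tau$, and $(A)_{(i,\sigma),(j,\tau)}$ forces $i=j$ together with $\sigma,\tau\in\mi_i$. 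This is exactly the defining formula for $M_{\sigma,\tau}$.

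Having established that $M=S^{\T}AS$ with $S^{\T}S=I_{|\mi|}$ and $A$ symmetric (each $M_i$ is symmetric, hence so is the block-diagonal $A$), Theorem \ref{thm:cauchy_plus} applies verbatim with $B=M$, $\mj=\mathcal{J}$, and yields $\eigsmallest{k}{M}\ge\eigsmallest{k}{A}=\eigsmallest{k}{\bigoplus_{i=1}^m M_i}$ and $\eiglargest{k}{M}\le\eiglargest{k}{A}=\eiglargest{k}{\bigoplus_{i=1}^m M_i}$ for all $1\le k\le|\mi|$, which is the claim. I do not expect a genuine obstacle here; the only point requiring care is the bookkeeping of the disjoint-union index set and checking that the block structure of $A$ correctly collapses the double sum to a single sum over those $i$ with $\sigma,\tau\in\mi_i$ — in particular that off-diagonal blocks of $A$ contribute nothing. (Lemma \ref{lemma:weyl} is not needed for this particular statement.)
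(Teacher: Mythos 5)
There is a fundamental mismatch: the statement you were asked to prove is Weyl's inequality, which concerns the eigenvalues of a \emph{sum} $A+B$ of two $n\times n$ real symmetric matrices, namely $\eigsmallest{i}{A+B}\ge \eigsmallest{i}{A}+\eigsmallest{1}{B}$ and $\eiglargest{i}{A+B}\le \eiglargest{i}{A}+\eiglargest{1}{B}$. Your proposal never mentions a sum of two matrices at all; instead it sets up the index sets $\mi_1,\ldots,\mi_m$, the weights $s_i$, the isometry $S$ with $S^{\T}S=I_{|\mi|}$, and the identity $M=S^{\T}\bigl(\bigoplus_{i=1}^m M_i\bigr)S$, and then invokes Theorem \ref{thm:cauchy_plus}. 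That is a (correct) proof of Lemma \ref{lemma:abstract_garland}, the local-to-global principle — indeed it coincides with the paper's own proof of that lemma — but it establishes nothing about $\eigsmallest{i}{A+B}$ or $\eiglargest{i}{A+B}$, so as a proof of the stated lemma it has a complete gap. (For reference, the paper does not prove Lemma \ref{lemma:weyl} at all; it cites it from Brouwer--Haemers, Theorem 2.8.1.)

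If you want a self-contained argument for the actual statement, the compression/interlacing machinery is not the natural tool; use the Courant--Fischer min-max characterization instead. For any subspace $U\subseteq\Rea^n$ with $\dim U=i$ and any unit vector $x\in U$ one has
\[
x^{\T}(A+B)x \;=\; x^{\T}Ax + x^{\T}Bx \;\ge\; x^{\T}Ax + \eigsmallest{1}{B},
\]
so taking the maximum over unit $x\in U$ and then the minimum over all $i$-dimensional $U$ gives $\eigsmallest{i}{A+B}\ge \eigsmallest{i}{A}+\eigsmallest{1}{B}$; the second inequality follows by applying the first to $-A$ and $-B$ (or by the dual max-min formula). Your bookkeeping of the disjoint-union index set and the collapse of the off-diagonal blocks is fine, but it answers a different question.
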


\subsection{High dimensional Laplacians}\label{sec:higher_laplacians}

Let $V$ be a finite set. A simplicial complex $X$ on vertex set $V$ is a family of subsets of $V$ closed with respect to inclusion. That is, if $\tau\in X$ and $\sigma\subset \tau$, then $\sigma\in X$. We call an element $\sigma\in X$ a \emph{face} or \emph{simplex} of $X$. The dimension of a simplex $\sigma$ is defined as $\dim(\sigma)=|\sigma|-1$. Note that the empty set is a $(-1)$-dimensional simplex of $X$. 
The dimension of $X$, denoted by $\dim(X)$, is the maximal dimension of a face of $X$. For $-1\le k\le \dim(X)$, we denote by $X(k)$ the set of $k$-dimensional simplices of $X$. 

Fix an arbitrary order $<$ on $V$, and let $0\le k\le \dim(X)$.  Recall that for $\sigma\in X(k-1)$ and $\tau\in X(k)$ such that $\sigma\subset \tau$, we defined $(\tau:\sigma)=(-1)^{|\{v\in \tau:\, v<u\}|}$, where $u$ is the unique element in $\tau\setminus\sigma$. 
We will need the following simple lemma.
\begin{lemma}[See, for example, {\cite[Lemma 2.4]{lew2025sums}}]
\label{lemma:sign}
   Let $X$ be a simplicial complex and let $0\le k\le \dim(X)-1$. Then, for $\sigma,\tau\in X(k)$ such that $|\sigma\cap \tau|=k$ and $\sigma\cup\tau\in X$,
    \[
        (\sigma\cup\tau:\sigma)(\sigma\cup\tau:\tau)=-(\sigma:\sigma\cap \tau)(\tau:\sigma\cap \tau).
    \]
\end{lemma}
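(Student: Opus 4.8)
The plan is to establish the identity by directly unwinding the definition $(\tau':\sigma')=(-1)^{|\{v\in\tau':\, v<u\}|}$, where $u$ is the unique vertex of $\tau'\setminus\sigma'$, in each of the four factors. Set $\rho=\sigma\cap\tau$, a set of size $k$; since $|\sigma|=|\tau|=k+1$ and $\sigma\ne\tau$, there are distinct vertices $a,b$ with $\sigma=\rho\cup\{a\}$ and $\tau=\rho\cup\{b\}$, and then $\sigma\cup\tau=\rho\cup\{a,b\}$. Because both sides of the claimed identity are unchanged under exchanging $\sigma$ and $\tau$ (which exchanges $a$ and $b$), I may assume without loss of generality that $a<b$.

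Next I would read off the relevant ``new'' vertex in each factor: it is $b$ for $(\sigma\cup\tau:\sigma)$, it is $a$ for both $(\sigma\cup\tau:\tau)$ and $(\sigma:\rho)$, and it is $b$ for $(\tau:\rho)$. Writing $r_a=|\{v\in\rho:\, v<a\}|$ and $r_b=|\{v\in\rho:\, v<b\}|$ and using $a<b$, the four exponents become: $|\{v\in\sigma\cup\tau:\, v<b\}|=r_b+1$ (the extra $1$ coming from $a$); $|\{v\in\sigma\cup\tau:\, v<a\}|=r_a$ (since $b>a$ is not counted); $|\{v\in\sigma:\, v<a\}|=r_a$; and $|\{v\in\tau:\, v<b\}|=r_b$.

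Putting this together, the left-hand side equals $(-1)^{r_b+1}\cdot(-1)^{r_a}=(-1)^{r_a+r_b+1}$, while the right-hand side equals $-(-1)^{r_a}\cdot(-1)^{r_b}=(-1)^{r_a+r_b+1}$, so the two agree. There is no genuine obstacle here: the argument is a short bookkeeping of signs in the simplicial chain complex, the only point requiring a moment's care being which of $a,b$ precedes the other, and this is dispatched cleanly by the symmetry reduction to the case $a<b$.
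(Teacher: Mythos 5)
Your argument is correct: the case split via $\rho=\sigma\cap\tau$, $\sigma=\rho\cup\{a\}$, $\tau=\rho\cup\{b\}$ with the symmetry reduction to $a<b$, and the four exponent counts $r_b+1$, $r_a$, $r_a$, $r_b$ are all accurate under the paper's definition of $(\tau:\sigma)$, giving both sides equal to $(-1)^{r_a+r_b+1}$. The paper itself gives no proof of this lemma (it only cites an external reference), so there is nothing to diverge from; your direct bookkeeping is exactly the standard verification one would supply. One cosmetic slip: your heading calls this the ``local to global principle'' (Lemma \ref{lemma:abstract_garland}), but what you actually prove --- correctly --- is the sign identity of Lemma \ref{lemma:sign}.
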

Recall that we defined the $k$-th boundary matrix $\partial_k(X)\in \mats{X(k-1)}{X(k)}$ by
\[
    \partial_k(X)_{\sigma,\tau}= \begin{cases}
        (\tau:\sigma) & \text{if } \sigma\subset \tau,\\
        0 & \text{otherwise,}
    \end{cases}
\]
for all $\sigma\in X(k-1)$ and $\tau\in X(k)$. It is a well-known fact that $\partial_k(X)\partial_{k+1}(X)=0$. The \emph{$k$-dimensional reduced homology group} (with real coefficients) of $X$ is defined as the quotient
\[
    \tilde{H}_k(X;\Rea) = \frac{\text{Ker } \partial_k(X)}{\text{Im } \partial_{k+1}(X)}.
\]
For a weight function $w:X\to\Rea_{>0}$, let $W_k(X)\in \matsi{X(k)}$ be the diagonal matrix defined by
\[
    W_k(X)_{\sigma,\sigma}= w(\sigma)
\]
for all $\sigma\in X(k)$. 

Let $0\le k\le \dim(X)-1$. The \emph{$k$-dimensional $w$-weighted upper Laplacian operator} on $X$ is the matrix
\[
    \lapw{k}{X}{w}{+} = W_k(X)^{-1} \partial_{k+1}(X) W_{k+1}(X)\partial_{k+1}(X)^{\T}\in \Rea^{X(k)\times X(k)}.
\]
For convenience, for $k=\dim(X)$, we define $\lapw{k}{X}{w}{+}=0\in \matsi{X(k)}$.
For $0\le k\le \dim(X)$, the \emph{$k$-dimensional $w$-weighted lower Laplacian operator} on $X$ is the matrix
\[
    \lapw{k}{X}{w}{-} = \partial_k(X)^{\T} W_{k-1}(X)^{-1} \partial_k(X) W_{k}(X)\in \Rea^{X(k)\times X(k)}.
\]
We define the \emph{$k$-dimensional $w$-weighted total Laplacian operator} on $X$ as
\[
    \lapw{k}{X}{w}{} = \lapw{k}{X}{w}{+}+\lapw{k}{X}{w}{-}.
\]
Note that $\lapw{k}{X}{w}{+}$, $\lapw{k}{X}{w}{-}$, and $\lapw{k}{X}{w}{}$ are not in general symmetric matrices. However, we may define symmetric positive semi-definite matrices
\[
    \simlapw{k}{X}{w}{+} = W_k(X)^{1/2} \lapw{k}{X}{w}{+} W_k(X)^{-1/2}=
    W_{k}(X)^{-1/2} \partial_{k+1}(X) W_{k+1}(X)\partial_{k+1}(X)^{\T} W_{k}(X)^{-1/2},
\]
\[
  \simlapw{k}{X}{w}{-}= W_k(X)^{1/2} \lapw{k}{X}{w}{-} W_k(X)^{-1/2}= W_k(X)^{1/2}\partial_k(X)^{\T} W_{k-1}(X)^{-1} \partial_k(X) W_{k}(X)^{1/2},
\]
and
\[
\simlapw{k}{X}{w}{}=W_k(X)^{1/2}\lapw{k}{X}{w}{} W_k(X)^{-1/2}=\simlapw{k}{X}{w}{+}+\simlapw{k}{X}{w}{-},
\]
which are similar to (and thus have the same spectrum as) the matrices $\lapw{k}{X}{w}{+}$, $\lapw{k}{X}{w}{-}$, and $\lapw{k}{X}{w}{}$, respectively. In particular, $\lapw{k}{X}{w}{+}$, $\lapw{k}{X}{w}{-}$, and $\lapw{k}{X}{w}{}$ are diagonalizable, and all of their eigenvalues are real and non-negative.

For a simplex $\sigma\in X$, let $N_X(\sigma)= \{v\in V\setminus\sigma:\, \sigma\cup\{v\}\in X\}$. That is, $N_X(\sigma)$ is the vertex set of the link of $\sigma$ in $X$. 
The following explicit formulas for the matrices $\simlapw{k}{X}{w}{+}$ and $\simlapw{k}{X}{w}{-}$ follow by simple computation (where, in the case of $\simlapw{k}{X}{w}{+}$, Lemma \ref{lemma:sign} is applied). See, for example, \cite{horak2013spectra}, for similar computations.
\begin{lemma}\label{lemma:lap_formulas}
Let $X$ be a simplicial complex, let $w:X\to\Rea_{>0}$, and let $0\le k\le \dim(X)$. Then,
\begin{equation}
    \label{eq:upper_sim_laplacian_def}
   \simlapw{k}{X}{w}{+}_{\sigma,\tau}= \begin{cases}
       \sum_{v\in N_X(\sigma)}\frac{w(\sigma\cup\{v\})}{w(\sigma)} & \text{if } \sigma=\tau,\\
      -(\sigma:\sigma\cap \tau)(\tau:\sigma\cap\tau) \frac{w(\sigma\cup\tau)}{\sqrt{w(\sigma)w(\tau)}} & \text{if } |\sigma\cap \tau|=k,\, \sigma\cup\tau\in X(k+1),\\
       0 & \text{otherwise,}
   \end{cases}
\end{equation}
and
\begin{equation}
    \label{eq:lower_sim_laplacian_def}
   \simlapw{k}{X}{w}{-}_{\sigma,\tau}= \begin{cases}
       \sum_{v\in \sigma}\frac{w(\sigma)}{w(\sigma\setminus\{v\})} & \text{if } \sigma=\tau,\\
      (\sigma:\sigma\cap \tau)(\tau:\sigma\cap\tau) \frac{\sqrt{w(\sigma)w(\tau)}}{w(\sigma\cap \tau)} & \text{if } |\sigma\cap \tau|=k,\\
       0 & \text{otherwise,}
   \end{cases}
\end{equation}
for all $\sigma,\tau\in X(k)$. 
\end{lemma}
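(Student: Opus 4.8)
The plan is to obtain both identities by directly expanding the matrix products in the definitions of $\simlapw{k}{X}{w}{+}$ and $\simlapw{k}{X}{w}{-}$ and then determining, for each pair $\sigma,\tau\in X(k)$, which summands survive. Since $W_k(X)$ is diagonal, the conjugating factors $W_\bullet(X)^{\pm 1/2}$ contribute only the scalars $w(\cdot)^{\pm 1/2}$; all the combinatorial content sits in a product of two boundary-matrix entries. In both cases the key point is that such a product vanishes unless the relevant containment relations hold, and once those are imposed a dimension count forces the summation index to be essentially unique. I expect the whole argument to be routine bookkeeping, the only genuinely delicate step being the sign in the off-diagonal entry of the upper Laplacian, which is precisely what Lemma \ref{lemma:sign} is tailored to.

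For the upper Laplacian I would write
\[
\simlapw{k}{X}{w}{+}_{\sigma,\tau}=\frac{1}{\sqrt{w(\sigma)w(\tau)}}\sum_{\rho\in X(k+1)}\partial_{k+1}(X)_{\sigma,\rho}\,w(\rho)\,\partial_{k+1}(X)_{\tau,\rho},
\]
and observe that a summand is nonzero only when $\sigma\subset\rho$ and $\tau\subset\rho$, i.e.\ $\sigma\cup\tau\subseteq\rho$. If $\sigma=\tau$, the admissible $\rho$ are exactly the $\sigma\cup\{v\}$ with $v\in N_X(\sigma)$, and each contributes $(\sigma\cup\{v\}:\sigma)^2\,w(\sigma\cup\{v\})=w(\sigma\cup\{v\})$, giving the stated diagonal value. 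If $\sigma\neq\tau$ and $|\sigma\cap\tau|=k$, then $|\sigma\cup\tau|=k+2$, so the only possible $\rho$ is $\sigma\cup\tau$ itself — which indeed occurs iff $\sigma\cup\tau\in X(k+1)$ — and the corresponding term equals $(\sigma\cup\tau:\sigma)(\sigma\cup\tau:\tau)\,w(\sigma\cup\tau)$; applying Lemma \ref{lemma:sign} rewrites the sign as $-(\sigma:\sigma\cap\tau)(\tau:\sigma\cap\tau)$, matching \eqref{eq:upper_sim_laplacian_def}. Finally, if $\sigma\neq\tau$ and $|\sigma\cap\tau|<k$ then $|\sigma\cup\tau|\geq k+3$, so no $(k+1)$-face contains $\sigma\cup\tau$ and the entry is $0$.

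For the lower Laplacian I would proceed symmetrically, writing
\[
\simlapw{k}{X}{w}{-}_{\sigma,\tau}=\sqrt{w(\sigma)w(\tau)}\sum_{\rho\in X(k-1)}\partial_k(X)_{\rho,\sigma}\,\frac{1}{w(\rho)}\,\partial_k(X)_{\rho,\tau},
\]
where now a summand is nonzero only when $\rho\subset\sigma$ and $\rho\subset\tau$, i.e.\ $\rho\subseteq\sigma\cap\tau$. If $\sigma=\tau$, the admissible $\rho$ are the $\sigma\setminus\{v\}$ with $v\in\sigma$, each contributing $(\sigma:\sigma\setminus\{v\})^2/w(\sigma\setminus\{v\})=1/w(\sigma\setminus\{v\})$, which yields the stated diagonal value. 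If $\sigma\neq\tau$ and $|\sigma\cap\tau|=k$, then $\sigma\cap\tau\in X(k-1)$ — it lies in $X$ since $X$ is downward closed — and it is the unique $(k-1)$-face contained in both $\sigma$ and $\tau$, so the sum collapses to the single term $(\sigma:\sigma\cap\tau)(\tau:\sigma\cap\tau)/w(\sigma\cap\tau)$, giving \eqref{eq:lower_sim_laplacian_def}. If $\sigma\neq\tau$ and $|\sigma\cap\tau|<k$, then $\sigma\cap\tau$ has fewer than $k$ vertices and contains no $(k-1)$-face, so the entry is $0$. This exhausts all cases; as noted, the only step requiring care beyond bookkeeping is the sign identity in the off-diagonal upper entry, supplied by Lemma \ref{lemma:sign}.
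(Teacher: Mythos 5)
Your proof is correct and is exactly the ``simple computation'' the paper alludes to: expanding the entries of $\simlapw{k}{X}{w}{+}$ and $\simlapw{k}{X}{w}{-}$ using the diagonality of the weight matrices, identifying the unique surviving summand (or the sum over $N_X(\sigma)$, resp.\ over $v\in\sigma$, on the diagonal), and invoking Lemma \ref{lemma:sign} for the off-diagonal sign of the upper Laplacian. The case analysis is complete and the bookkeeping (including the vanishing cases $|\sigma\cap\tau|<k$) is right, so nothing is missing.
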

Eckmann showed in \cite{eckmann1945haromic} that, for all $k\ge 0$, $\tilde{H}_k(X;\Rea)\cong \text{Ker }\lapw{k}{X}{w}{}$. Equivalently, 

\begin{lemma}[Eckmann \cite{eckmann1945haromic}]\label{lemma:eckmann}
    Let $X$ be a simplicial complex, $w:X\to\Rea_{>0}$, and $0\le k\le \dim(X)$. Then, for $0\le j\le |X(k)|-1$,  
    $\dim(\tilde{H}_k(X;\Rea))\le j$ if and only if $\eigsmallest{j+1}{\lapw{k}{X}{w}{}}>0$.
\end{lemma}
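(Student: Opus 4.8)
The plan is to derive the stated equivalence from the Hodge-type identity $\dim(\tilde{H}_k(X;\Rea)) = \dim(\Ker \lapw{k}{X}{w}{})$ asserted by Eckmann, whose proof I also outline below. Granting the identity, the lemma follows at once: the matrix $\lapw{k}{X}{w}{}$ is diagonalizable with non-negative real eigenvalues (as recalled in Section~\ref{sec:higher_laplacians}), so the dimension of its kernel is exactly the number of its eigenvalues (counted with multiplicity) that equal $0$; equivalently, $\dim(\Ker \lapw{k}{X}{w}{})\le j$ holds if and only if the $(j+1)$-st smallest eigenvalue is positive, i.e.\ $\eigsmallest{j+1}{\lapw{k}{X}{w}{}}>0$.

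To prove the Hodge identity, first note that $\lapw{k}{X}{w}{}$ is similar to the symmetric positive semidefinite matrix $\simlapw{k}{X}{w}{}$, so it suffices to identify $\Ker \simlapw{k}{X}{w}{}$ with $\tilde{H}_k(X;\Rea)$ up to isomorphism. Set $A_j = W_{j-1}(X)^{-1/2}\partial_j(X) W_j(X)^{1/2}$ for each $j$ in the relevant range, with the convention $A_{\dim(X)+1}=0$ (matching $\lapw{\dim(X)}{X}{w}{+}=0$). Inserting $W_j(X)^{1/2}W_j(X)^{1/2}=W_j(X)$ into the definitions of $\simlapw{k}{X}{w}{+}$ and $\simlapw{k}{X}{w}{-}$ shows that $\simlapw{k}{X}{w}{+}=A_{k+1}A_{k+1}^{\T}$ and $\simlapw{k}{X}{w}{-}=A_k^{\T}A_k$, while $\partial_k(X)\partial_{k+1}(X)=0$ gives $A_kA_{k+1}=0$. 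Hence $\simlapw{k}{X}{w}{}=A_k^{\T}A_k+A_{k+1}A_{k+1}^{\T}$ is the combinatorial Laplacian of the chain complex $(\Rea^{X(j)},A_j)$.

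Next I would apply the elementary Hodge decomposition for real matrices: if $P$ and $Q$ satisfy $PQ=0$, then, since $P^{\T}P+QQ^{\T}$ is positive semidefinite, $\Ker(P^{\T}P+QQ^{\T})=\Ker(P)\cap\Ker(Q^{\T})=\Ker(P)\cap\Ima(Q)^{\perp}$; as $\Ima(Q)\subseteq\Ker(P)$, this space is the orthogonal complement of $\Ima(Q)$ inside $\Ker(P)$ and is therefore isomorphic to $\Ker(P)/\Ima(Q)$. Taking $P=A_k$ and $Q=A_{k+1}$ yields $\Ker \simlapw{k}{X}{w}{}\cong \Ker(A_k)/\Ima(A_{k+1})$, the degree-$k$ homology of $(\Rea^{X(j)},A_j)$. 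Finally, the diagonal matrices $W_j(X)^{-1/2}$ constitute a chain isomorphism from $(\Rea^{X(j)},\partial_j(X))$ to $(\Rea^{X(j)},A_j)$, because $A_jW_j(X)^{-1/2}=W_{j-1}(X)^{-1/2}\partial_j(X)$; hence the two complexes have isomorphic homology, and $\Ker(A_k)/\Ima(A_{k+1})\cong\tilde{H}_k(X;\Rea)$. Chaining these isomorphisms gives $\dim(\tilde{H}_k(X;\Rea))=\dim(\Ker\simlapw{k}{X}{w}{})=\dim(\Ker\lapw{k}{X}{w}{})$, as needed.

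There is no genuine obstacle here: the proof is a standard discrete Hodge-theory argument. The only points that require care are the bookkeeping with the square-root weight matrices (so that the two summands of $\simlapw{k}{X}{w}{}$ factor cleanly as $A_k^{\T}A_k$ and $A_{k+1}A_{k+1}^{\T}$, and so that $W_\bullet(X)^{-1/2}$ is a genuine chain map rather than merely a degreewise linear isomorphism) and the top-dimensional case $k=\dim(X)$, which is covered by the convention $A_{\dim(X)+1}=0$.
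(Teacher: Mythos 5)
Your proof is correct, and it is more self-contained than what the paper does: the paper offers no proof of this lemma at all, but simply cites Eckmann for the isomorphism $\tilde{H}_k(X;\Rea)\cong \Ker \lapw{k}{X}{w}{}$ and records the eigenvalue reformulation as an immediate consequence of the fact (noted in Section~\ref{sec:higher_laplacians}) that $\lapw{k}{X}{w}{}$ is similar to the symmetric positive semidefinite matrix $\simlapw{k}{X}{w}{}$. Your reduction of the lemma to that isomorphism coincides with the paper's implicit reasoning, and your verification of the isomorphism itself is the standard discrete Hodge argument, carried out correctly in the weighted setting: the conjugated boundaries $A_j=W_{j-1}(X)^{-1/2}\partial_j(X)W_j(X)^{1/2}$ do satisfy $A_kA_{k+1}=0$, $\simlapw{k}{X}{w}{-}=A_k^{\T}A_k$ and $\simlapw{k}{X}{w}{+}=A_{k+1}A_{k+1}^{\T}$, the identity $\Ker(A_k^{\T}A_k+A_{k+1}A_{k+1}^{\T})=\Ker(A_k)\cap \Ima(A_{k+1})^{\perp}$ follows from positive semidefiniteness exactly as you say, and the diagonal maps $W_j(X)^{-1/2}$ form a chain isomorphism identifying $\Ker(A_k)/\Ima(A_{k+1})$ with $\tilde{H}_k(X;\Rea)$; your handling of the top-dimensional convention $A_{\dim(X)+1}=0$ is also consistent with the paper's convention $\lapw{\dim(X)}{X}{w}{+}=0$. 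What your route buys is a complete proof of the quoted result rather than an appeal to the literature; what the paper's route buys is brevity, since for its purposes only the equivalence between $\dim(\Ker)\le j$ and $\eigsmallest{j+1}{\lapw{k}{X}{w}{}}>0$ (via diagonalizability and non-negativity of the spectrum) needs to be spelled out.
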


\section{A local to global principle for matrices}\label{sec:abstract_garland}

In this section, we prove Lemma \ref{lemma:abstract_garland}, which may be seen as an abstract version of Garland's method. 
Let $\mi_1,\ldots,\mi_m$ be finite sets, and let $M_i\in \matsi{\mi_i}$ for $1\le i\le m$. Recall that the \emph{direct sum} of $M_1,\ldots,M_m$ is the block diagonal matrix 
\[
    \bigoplus_{i=1}^m M_i = \begin{pmatrix}
M_1 \\
& \ddots \\
& & M_m
\end{pmatrix}.
\]
Note that the spectrum of $\bigoplus_{i=1}^m M_i$ is the union of the spectra of $M_1,\ldots,M_m$.

\begin{proof}[Proof of Lemma \ref{lemma:abstract_garland}]

 Let $\mi$ be a finite set, and let $\mi_1,\ldots, \mi_m\subset \mi$. For $1\leq i\leq m$, let $M_i\in \matsi{\mi_i}$ be a symmetric matrix  and let $s_i: \mi_i\to \Rea$ such that $\sum_{i:\, \sigma\in \mi_i} s_i(\sigma)^2=1$ for all $\sigma\in\mi$.
    The matrix $M\in \matsi{\mi}$ is defined by
    \[
        M_{\sigma,\tau} = \sum_{\substack{i\in\{1,\ldots,m\}:\\\sigma,\tau\in \mi_i}} s_i(\sigma) s_i(\tau) (M_i)_{\sigma,\tau}     
    \]
    for all $\sigma,\tau\in \mi$.
    
    Let $\mj=\left\{ (i,\sigma):\, 1\leq i\leq m,\, \sigma\in\mi_i\right\}$. We can think of $\bigoplus_{i=1}^m M_i$ as a matrix in $\matsi{\mj}$ defined by
    \[
    \left(\bigoplus_{i=1}^m M_i\right)_{(j,\sigma),(k,\tau)}= \begin{cases}
                    (M_j)_{\sigma,\tau} & \text{if } j=k,\\
                    0 & \text{otherwise,}
    \end{cases} 
    \]
    for all $(j,\sigma),(k,\tau)\in \mathcal{J}$. 
    Let $S\in \mats{\mj}{\mi}$ be defined by
    \[
        S_{(i,\sigma),\tau}= \begin{cases}
            s_i(\sigma) & \text{if } \sigma=\tau,\\
            0 & \text{otherwise.}
        \end{cases}
    \]
    It is easy to check, using the fact that $\sum_{i:\sigma\in \mi_i} s_i(\sigma)^2=1$ for all $\sigma\in \mi$, that $S^{\T} S=I_{|\mi|}$. On the other hand, 
    note that $S^{\T}\left(\bigoplus_{i=1}^m M_i\right)S= M$.
    Therefore, by Theorem \ref{thm:cauchy_plus}, we have, for all $1\leq k \leq |\mi|$,
    \[
     \eigsmallest{k}{M} \geq  
     \eigsmallest{k}{\bigoplus_{i=1}^m M_i},
    \]
    and
    \[
     \eiglargest{k}{M} \leq  
      \eiglargest{k}{\bigoplus_{i=1}^m M_i}.
    \]
\end{proof}

\section{Garland's method via interlacing}\label{sec:main}

We proceed to prove our main results, Theorem \ref{thm:garland_plus} and Corollary \ref{cor:garland_plus}. First, we state and prove generalized versions of these results (Theorem \ref{thm:garland_main} and Corollary \ref{cor:main_garland}), which hold for a simplicial complex with an arbitrary weight function. In Section \ref{sec:main:special}, we show how they imply Theorem \ref{thm:garland_plus} and Corollary \ref{cor:garland_plus}, as well as their unweighted analogues.

Let $X$ be a simplicial complex on vertex set $V$, and let $w: X\to \Rea_{>0}$ be a weight function on $X$. Let $<$ be an arbitrary linear order on $V$. Recall that for $\sigma\in X$, we denote $N_X(\sigma)=\{v\in V\setminus\sigma:\, \sigma\cup\{v\}\in X\}$. For $0\le k\le \dim(X)$, let 
\[
    \wmaxdeg{k}{X}{w}= \max_{\sigma\in X(k)} \sum_{v\in N_X(\sigma)} \frac{w(\sigma\cup \{v\})}{w(\sigma)}.
\]
For $\eta\in X$, we define a weight function $w_{\eta}: \lk(X,\eta)\to \Rea_{>0}$ by
\[
    w_{\eta}(\sigma)= w(\sigma\cup\eta)
\]
for all $\sigma\in \lk(X,\eta)$. 
Our main goal in this section is to prove the following two results.

\begin{theorem}\label{thm:garland_main}
    Let $X$ be a simplicial complex, and let $w:X\to\Rea_{>0}$. Let $0\le \ell<k\le \dim(X)$. Then, for all $1\le i\le |X(k)|$, 
    \[
       (k-\ell)\cdot \eigsmallest{i}{\lapw{k}{X}{w}{}} \ge (k+1)\cdot \eigsmallest{i}{\bigoplus_{\substack{\eta\in X(\ell):\\\dim(\lk(X,\eta))\ge k-\ell-1}} \lapw{k-\ell-1}{\lk(X,\eta)}{w_{\eta}}{}} - (\ell+1)\wmaxdeg{k}{X}{w}.
    \]
\end{theorem}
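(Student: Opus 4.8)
The plan is to apply the abstract local-to-global principle (Lemma \ref{lemma:abstract_garland}) with the index set $\mi = X(k)$, the subsets $\mi_\eta = \lk(X,\eta)(k-\ell-1)$ ranging over $\eta\in X(\ell)$ with $\dim(\lk(X,\eta))\ge k-\ell-1$ (these are in natural bijection with $\{\sigma\in X(k):\eta\subset\sigma\}$, via $\tau\mapsto \tau\cup\eta$), and the local matrices $M_\eta = \simlapw{k-\ell-1}{\lk(X,\eta)}{w_\eta}{}$, suitably re-indexed by $X(k)$. The scaling functions $s_\eta$ should be chosen so that $s_\eta(\tau)^2$ is proportional to $w(\tau)$ among the $\binom{k+1}{\ell+1}$ faces $\eta\subset\tau$ of dimension $\ell$; concretely $s_\eta(\tau)^2 = w(\tau)/\big(\binom{k+1}{\ell+1}\,w(\tau)\big) = 1/\binom{k+1}{\ell+1}$ if I use the symmetrized matrices (so the normalization $\sum_{\eta\subset\tau,\,\eta\in X(\ell)} s_\eta(\tau)^2 = 1$ holds automatically since there are exactly $\binom{k+1}{\ell+1}$ such $\eta$). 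The first key step is therefore to write down this dictionary carefully and verify the normalization condition of Lemma \ref{lemma:abstract_garland}.

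The second, and technically central, step is to compute the matrix $M$ produced by Lemma \ref{lemma:abstract_garland} from these data and identify it, up to the additive and multiplicative constants in the statement, with $\simlapw{k}{X}{w}{}$. This is the ``Garland identity'' and is where the real work lies. Using the explicit formulas of Lemma \ref{lemma:lap_formulas} for $\simlapw{k-\ell-1}{\lk(X,\eta)}{w_\eta}{}$ (both the upper and lower parts), and summing over the $\binom{k+1}{\ell+1}$ faces $\eta$ of dimension $\ell$ contained in a given $\tau\in X(k)$, I expect the off-diagonal entries (for $|\sigma\cap\tau|=k$, $\sigma\cup\tau\in X(k+1)$) to reproduce a fixed multiple of $\simlapw{k}{X}{w}{+}_{\sigma,\tau}$ coming from the upper Laplacians on links, plus a fixed multiple of $\simlapw{k}{X}{w}{-}_{\sigma,\tau}$ coming from the lower Laplacians on links — the sign bookkeeping here is governed by Lemma \ref{lemma:sign}. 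The diagonal entries will be the delicate part: summing the local diagonal terms over $\eta$ will give the diagonal of some multiple of $\simlapw{k}{X}{w}{+}+\simlapw{k}{X}{w}{-}$ plus a correction term, and this correction is exactly what accounts for the $(\ell+1)\wmaxdeg{k}{X}{w}$ defect. A careful count shows each pair $(\eta,v)$ with $v\in N_{\lk(X,\eta)}(\tau\setminus\eta)$ corresponds either to a genuine $(k+1)$-face extension of $\tau$ (contributing to the upper Laplacian of $X$) or to a vertex of $\tau$ not in $\eta$ re-entering — the latter giving the lower-Laplacian diagonal and the former, after subtracting, leaving a term bounded by $\wmaxdeg{k}{X}{w}$.

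Once the identity $M = \tfrac{k-\ell}{k+1}\,\simlapw{k}{X}{w}{} + (\text{lower-order diagonal-ish term})$ is in hand, the proof concludes quickly: by Lemma \ref{lemma:abstract_garland}, $\eigsmallest{i}{M}\ge \eigsmallest{i}{\bigoplus_\eta M_\eta}$; then Weyl's inequality (Lemma \ref{lemma:weyl}) is used to absorb the correction term, using that its smallest eigenvalue is at least $-(\ell+1)\wmaxdeg{k}{X}{w}/(k+1)$ (or rather, bounding $\eigsmallest{i}{\simlapw{k}{X}{w}{}}$ below in terms of $\eigsmallest{i}{M}$ via $\simlapw{k}{X}{w}{} = \tfrac{k+1}{k-\ell}(M - E)$ with $\eiglargest{1}{E}$ controlled by $\wmaxdeg{k}{X}{w}$); rearranging and using the spectral equivalence of $\simlapw{k}{X}{w}{}$ with $\lapw{k}{X}{w}{}$ (and likewise on links) gives the stated inequality. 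I expect the main obstacle to be the diagonal entry computation in step two — getting the constants and the $\wmaxdeg{k}{X}{w}$ defect exactly right, and making sure the identity is an inequality in precisely the direction needed — since the off-diagonal part and the final interlacing/Weyl step are routine given the machinery already set up.
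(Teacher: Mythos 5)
Your overall route is exactly the paper's: re-index the symmetrized link Laplacians by the $k$-faces containing $\eta$, feed them into Lemma \ref{lemma:abstract_garland} with $s_\eta(\sigma)^2=\binom{k+1}{\ell+1}^{-1}$, identify the resulting matrix $M$ with $\frac{1}{k+1}\bigl((k-\ell)\simlapw{k}{X}{w}{}+(\ell+1)R\bigr)$ for a diagonal term $R$ with $\eiglargest{1}{R}=\wmaxdeg{k}{X}{w}$, and finish with Weyl's inequality and the similarity of $\simlapw{k}{X}{w}{}$ with $\lapw{k}{X}{w}{}$. The final interlacing-plus-Weyl step is set up correctly, and your reading of where the defect comes from is essentially right (in fact it is cleaner than you anticipate: since $N_{\lk(X,\eta)}(\sigma\setminus\eta)=N_X(\sigma)$, every link's upper-Laplacian diagonal entry at $\sigma$ equals $R_k(X;w)_{\sigma,\sigma}$, so the surplus is exactly $\binom{k}{\ell}R_k(X;w)$ coming from $\binom{k+1}{\ell+1}$ versus $\binom{k}{\ell+1}$ counts, not merely a bounded error; the lower parts match with no defect at all).

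There is, however, one genuine gap: you only specify $s_\eta(\tau)^2$, and as written (a constant positive $s_\eta$) the central identity is false. The off-diagonal signs of the link Laplacians, $(\sigma\setminus\eta:(\sigma\cap\tau)\setminus\eta)(\tau\setminus\eta:(\sigma\cap\tau)\setminus\eta)$, differ from the global signs $(\sigma:\sigma\cap\tau)(\tau:\sigma\cap\tau)$ by a factor $(-1)^{|\{v\in\eta:\,v<u\}|+|\{v\in\eta:\,v<u'\}|}$ (with $u,u'$ the vertices in $\sigma\setminus\tau$, $\tau\setminus\sigma$, or the added vertices in the upper case), and this factor genuinely depends on $\eta$; summing over $\eta$ with unsigned weights would therefore produce cancellations rather than a fixed multiple of $\simlapw{k}{X}{w}{\pm}_{\sigma,\tau}$. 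The fix has to be built into $s_\eta$ itself, since the $M_\eta$ are fixed: the paper takes $s_{\eta}(\sigma)=\binom{k+1}{\ell+1}^{-1/2}(-1)^{|\{(i,j):\,i\in\sigma\setminus\eta,\,j\in\eta,\,i>j\}|}$ and proves the compensation in its Lemmas \ref{lemma:sign1} and \ref{lemma:sign2}. Note also that Lemma \ref{lemma:sign}, which you cite for the sign bookkeeping, compares upper- and lower-type incidence signs within one complex and does not address this $\eta$-dependent discrepancy between link and ambient boundary signs; the required statements are precisely the $s_\eta$-sign lemmas above. With that sign choice inserted, your plan coincides with the paper's proof.
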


\begin{corollary}\label{cor:main_garland}
    Let $X$ be a simplicial complex, and let $w:X\to\Rea_{>0}$. Then, for all $0\le \ell<k\le \dim(X)$,
    \[
        \dim(\tilde{H}_k(X;\Rea)) \le \sum_{\substack{\eta\in X(\ell):\\ \dim(\lk(X,\eta))\ge k-\ell-1}} \left| \left\{ \lambda\in {\rm Spec}(\lapw{k-\ell-1}{\lk(X,\eta)}{w_{\eta}}{}):\, \lambda\le \frac{\ell+1}{k+1}\cdot \wmaxdeg{k}{X}{w}\right\}\right|.
    \]
\end{corollary}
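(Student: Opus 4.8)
The plan is to derive the corollary from Theorem \ref{thm:garland_main} by combining it with Eckmann's identification of reduced homology with the kernel of the total Laplacian (Lemma \ref{lemma:eckmann}), together with a counting of small eigenvalues. Write $h=\dim(\tilde{H}_k(X;\Rea))$. By Lemma \ref{lemma:eckmann}, $h$ equals the number of zero eigenvalues of $\lapw{k}{X}{w}{}$, so $\eigsmallest{i}{\lapw{k}{X}{w}{}}=0$ for $1\le i\le h$ and $\eigsmallest{h+1}{\lapw{k}{X}{w}{}}>0$ (if $h<|X(k)|$; if $h=|X(k)|$ the right-hand side of the corollary must already be at least $|X(k)|$ and we are done, but this degenerate case deserves a line). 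Actually, for the counting argument I only need $\eigsmallest{h}{\lapw{k}{X}{w}{}}=0$, i.e. that there are at least $h$ zero eigenvalues.

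Next I would feed $i=h$ into Theorem \ref{thm:garland_main}. Since $\eigsmallest{h}{\lapw{k}{X}{w}{}}=0$, the inequality
\[
(k-\ell)\cdot \eigsmallest{h}{\lapw{k}{X}{w}{}} \ge (k+1)\cdot \eigsmallest{h}{\bigoplus_{\eta} \lapw{k-\ell-1}{\lk(X,\eta)}{w_{\eta}}{}} - (\ell+1)\wmaxdeg{k}{X}{w}
\]
forces
\[
\eigsmallest{h}{\bigoplus_{\substack{\eta\in X(\ell):\\\dim(\lk(X,\eta))\ge k-\ell-1}} \lapw{k-\ell-1}{\lk(X,\eta)}{w_{\eta}}{}} \le \frac{\ell+1}{k+1}\cdot \wmaxdeg{k}{X}{w}.
\]
This says the $h$-th smallest eigenvalue of the direct sum is at most the threshold $\theta := \frac{\ell+1}{k+1}\wmaxdeg{k}{X}{w}$, which means the direct sum has at least $h$ eigenvalues (counted with multiplicity) that are $\le \theta$. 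Since the spectrum of a direct sum is the disjoint union of the spectra of the summands, the number of eigenvalues of $\bigoplus_\eta \lapw{k-\ell-1}{\lk(X,\eta)}{w_\eta}{}$ that are $\le\theta$ equals $\sum_\eta |\{\lambda\in\mathrm{Spec}(\lapw{k-\ell-1}{\lk(X,\eta)}{w_\eta}{}):\lambda\le\theta\}|$. Combining, $h$ is at most this sum, which is exactly the claimed bound.

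There is essentially no hard obstacle here; the corollary is a routine extraction from the theorem, and the only points requiring care are (i) handling the boundary case $h=|X(k)|$ so that the index $i=h$ is legitimately in the range $1\le i\le|X(k)|$ — one checks separately that the right-hand side is then $\ge|X(k)|\ge h$, perhaps by noting every $\eta\in X(\ell)$ with $\dim(\lk(X,\eta))\ge k-\ell-1$ contributes all of $|\lk(X,\eta)(k-\ell-1)|$ eigenvalues when the threshold is large, or more simply by invoking Theorem \ref{thm:garland_main} with $i$ slightly below $|X(k)|$; and (ii) correctly phrasing ``at least $h$ eigenvalues $\le\theta$'' from ``$\eigsmallest{h}{\cdot}\le\theta$'', which is immediate from the definition of the $h$-th smallest eigenvalue. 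The substantive content all lives in Theorem \ref{thm:garland_main}, whose proof is where Lemma \ref{lemma:abstract_garland} and the interlacing machinery do the real work.
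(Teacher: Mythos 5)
Your proposal is correct and is essentially the paper's own argument run in the contrapositive direction: the paper sets $m$ equal to the eigenvalue count and uses Theorem \ref{thm:garland_main} (with $i=m+1$) plus Lemma \ref{lemma:eckmann} to get $\eigsmallest{m+1}{\lapw{k}{X}{w}{}}>0$ and hence $\dim\tilde{H}_k\le m$, while you set $h=\dim\tilde{H}_k$, use Eckmann to get $\eigsmallest{h}{\lapw{k}{X}{w}{}}=0$, and apply the theorem with $i=h$ to force at least $h$ small eigenvalues in the direct sum. (Your worry about $h=|X(k)|$ is unnecessary, since $i=h$ always lies in the admissible range $1\le i\le |X(k)|$.)
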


Let $X$ be a simplicial complex on vertex set $V$, and let $0\le \ell<k\le \dim(X)$. For $\eta\in X(\ell)$, let 
\[
\Xv{X}{\eta}{k}= \{\sigma\in X(k):\, \eta\subset\sigma\}.
\]
For $\eta\in X(\ell)$ and $\sigma\in \Xv{X}{\eta}{k}$, let
\[
s_{\eta}(\sigma)=\binom{k+1}{\ell+1}^{-1/2}\cdot (-1)^{|\{(i,j):\, i\in \sigma\setminus \eta,\, j\in \eta,\, i>j\}|}.
\]
Note that, for all $\sigma\in X(k)$, since $\left|\{\eta\in X(\ell):\, \sigma\in \Xv{X}{\eta}{k}\}\right|= \binom{k+1}{\ell+1}$, we have 
\[
\sum_{\eta\in X(\ell):\, \sigma\in \Xv{X}{\eta}{k}} s_{\eta}(\sigma)^2=1.
\]

Let $R_k(X;w)\in \matsi{X(k)}$ be the diagonal matrix defined by
\begin{equation}\label{eq:error_term_def}
    R_k(X;w)_{\sigma,\sigma}= \sum_{v\in N_X(\sigma)}\frac{w(\sigma\cup\{v\})}{w(\sigma)}
\end{equation}
for all $\sigma\in X(k)$. Note that $\eiglargest{1}{R_k(X;w)}=\wmaxdeg{k}{X}{w}$. 

 Assume $\dim(\lk(X,\eta))\ge k-\ell-1$. Note that the map $\sigma\mapsto\sigma\setminus\eta$ is a bijection from $\Xv{X}{\eta}{k}$ to $\lk(X,\eta)(k-\ell-1)$.
  Hence, we may define matrices $\simlapws{k-\ell-1}{\lk(X,\eta)}{w_{\eta}}{+}$, $\simlapws{k-\ell-1}{\lk(X,\eta)}{w_{\eta}}{-}$, and $\simlapws{k-\ell-1}{\lk(X,\eta)}{w_{\eta}}{}$ in $\matsi{\Xv{X}{\eta}{k}}$ by
\[
\simlapws{k-\ell-1}{\lk(X,\eta)}{w_{\eta}}{+}_{\sigma,\tau}=\simlapw{k-\ell-1}{\lk(X,\eta)}{w_{\eta}}{+}_{\sigma\setminus\eta,\tau\setminus\eta},
\]
\[
\simlapws{k-\ell-1}{\lk(X,\eta)}{w_{\eta}}{-}_{\sigma,\tau}=\simlapw{k-\ell-1}{\lk(X,\eta)}{w_{\eta}}{-}_{\sigma\setminus\eta,\tau\setminus\eta},
\]
and
\[
\simlapws{k-\ell-1}{\lk(X,\eta)}{w_{\eta}}{}_{\sigma,\tau}=\simlapw{k-\ell-1}{\lk(X,\eta)}{w_{\eta}}{}_{\sigma\setminus\eta,\tau\setminus\eta},
\]
for all $\sigma,\tau\in \Xv{X}{\eta}{k}$. 

Note that $\simlapws{k-\ell-1}{\lk(X,\eta)}{w_{\eta}}{+}$, $\simlapws{k-\ell-1}{\lk(X,\eta)}{w_{\eta}}{-}$, and $\simlapws{k-\ell-1}{\lk(X,\eta)}{w_{\eta}}{}$ are equal, as matrices, to $\simlapw{k-\ell-1}{\lk(X,\eta)}{w_{\eta}}{+}$, $\simlapw{k-\ell-1}{\lk(X,\eta)}{w_{\eta}}{-}$, and $\simlapw{k-\ell-1}{\lk(X,\eta)}{w_{\eta}}{}$, respectively, and differ from them only in their indexing convention.

The proof of Theorem \ref{thm:garland_main} relies on the two following key identities.

\begin{proposition}\label{prop:lplus}
    Let $X$ be a simplicial complex, and let $w:X\to \Rea_{>0}$.  Let $0\le \ell<k\le \dim(X)$. Then, for all $\sigma,\tau\in X(k)$,
    \[  
         \sum_{\substack{\eta\in X(\ell):\\ \sigma,\tau\in \Xv{X}{\eta}{k}} }s_{\eta}(\sigma)s_{\eta}(\tau)\cdot (k+1)\cdot \simlapws{k-\ell-1}{\lk(X,\eta)}{w_{\eta}}{+}_{\sigma,\tau}=  (k-\ell)\cdot \simlapw{k}{X}{w}{+}_{\sigma,\tau} + (\ell+1)\cdot R_k(X;w)_{\sigma,\tau}. 
    \]
\end{proposition}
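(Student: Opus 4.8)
The plan is to verify the identity entry by entry, directly from the explicit formula for the upper Laplacian in Lemma~\ref{lemma:lap_formulas}. Fix $\sigma,\tau\in X(k)$. An index $\eta\in X(\ell)$ contributes to the left-hand side only when $\eta\subseteq\sigma\cap\tau$; for every such $\eta$ the bijection $\rho'\mapsto\rho'\setminus\eta$ from $\Xv{X}{\eta}{k}$ onto $\lk(X,\eta)(k-\ell-1)$ satisfies $w_\eta(\rho'\setminus\eta)=w(\rho')$ and $w_\eta\bigl((\rho'\setminus\eta)\cup(\rho''\setminus\eta)\bigr)=w(\rho'\cup\rho'')$ whenever $\rho'\cup\rho''\in X$, and moreover $N_{\lk(X,\eta)}(\sigma\setminus\eta)=N_X(\sigma)$. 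I would then split into three cases according to $|\sigma\cap\tau|$.

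\emph{Case $\sigma=\tau$.} Exactly $\binom{k+1}{\ell+1}$ indices $\eta$ contribute, each with $s_\eta(\sigma)^2=\binom{k+1}{\ell+1}^{-1}$, and by the first clause of Lemma~\ref{lemma:lap_formulas} each diagonal link entry $\simlapws{k-\ell-1}{\lk(X,\eta)}{w_\eta}{+}_{\sigma,\sigma}$ equals $\sum_{v\in N_X(\sigma)}w(\sigma\cup\{v\})/w(\sigma)=\simlapw{k}{X}{w}{+}_{\sigma,\sigma}=R_k(X;w)_{\sigma,\sigma}$. Summing, the left-hand side is $(k+1)\simlapw{k}{X}{w}{+}_{\sigma,\sigma}=\bigl((k-\ell)+(\ell+1)\bigr)\simlapw{k}{X}{w}{+}_{\sigma,\sigma}$, which is the right-hand side.

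\emph{Case $|\sigma\cap\tau|=k$.} Write $\rho=\sigma\cap\tau$, $\sigma=\rho\cup\{a\}$, $\tau=\rho\cup\{b\}$; the contributing $\eta$ are the $\binom{k}{\ell+1}$ subsets of $\rho$ of size $\ell+1$, and for each of them $(\sigma\setminus\eta)\cap(\tau\setminus\eta)=\rho\setminus\eta$ has size $k-\ell-1$, while $(\sigma\setminus\eta)\cup(\tau\setminus\eta)\in\lk(X,\eta)(k-\ell)$ if and only if $\sigma\cup\tau\in X(k+1)$. If $\sigma\cup\tau\notin X$ then every link entry and $\simlapw{k}{X}{w}{+}_{\sigma,\tau}$ vanish, so both sides are $0$. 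Otherwise the decisive computation is the sign bookkeeping: from the definition of $(\,\cdot:\cdot\,)$ one obtains $(\sigma\setminus\eta:\rho\setminus\eta)=(\sigma:\rho)\,(-1)^{|\{j\in\eta:\,j<a\}|}$ together with the analogous identity for $\tau$ and $b$, while the definition of $s_\eta$ gives $s_\eta(\sigma)s_\eta(\tau)=\binom{k+1}{\ell+1}^{-1}(-1)^{|\{j\in\eta:\,j<a\}|+|\{j\in\eta:\,j<b\}|}$, the common contribution of pairs inside $\rho\setminus\eta$ occurring twice and cancelling. Hence, using the second clause of Lemma~\ref{lemma:lap_formulas}, every summand $(k+1)\,s_\eta(\sigma)s_\eta(\tau)\,\simlapws{k-\ell-1}{\lk(X,\eta)}{w_\eta}{+}_{\sigma,\tau}$ equals $-(k+1)\binom{k+1}{\ell+1}^{-1}(\sigma:\rho)(\tau:\rho)\,w(\sigma\cup\tau)/\sqrt{w(\sigma)w(\tau)}$, independent of $\eta$. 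Summing over the $\binom{k}{\ell+1}$ choices and using $(k+1)\binom{k}{\ell+1}=(k-\ell)\binom{k+1}{\ell+1}$ gives $(k-\ell)\simlapw{k}{X}{w}{+}_{\sigma,\tau}$, which matches the right-hand side since $R_k(X;w)_{\sigma,\tau}=0$.

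\emph{Case $|\sigma\cap\tau|\le k-1$.} Any contributing $\eta$ has $|(\sigma\setminus\eta)\cap(\tau\setminus\eta)|\le k-\ell-2$, so all link entries vanish by the last clause of Lemma~\ref{lemma:lap_formulas}, and $\simlapw{k}{X}{w}{+}_{\sigma,\tau}=R_k(X;w)_{\sigma,\tau}=0$ as well; both sides are $0$. The step I expect to be the main obstacle is the sign bookkeeping in the second case: one must carefully relate the incidence signs computed in $\lk(X,\eta)$ to those computed in $X$, and verify that the normalizing signs built into the functions $s_\eta$ are exactly what is needed to make every summand of the link sum equal. Everything else reduces to the elementary weight and binomial identities noted above.
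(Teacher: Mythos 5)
Your proof is correct and follows essentially the same route as the paper: an entry-by-entry comparison via Lemma \ref{lemma:lap_formulas}, counting the $\binom{k+1}{\ell+1}$ (diagonal case) and $\binom{k}{\ell+1}$ (case $|\sigma\cap\tau|=k$) contributing faces $\eta$, and the sign identity relating the link incidence signs and the normalizations $s_\eta$ to the ambient incidence signs, which you verify correctly. The only cosmetic difference is that the paper isolates your inline sign bookkeeping as separate statements (Lemmas \ref{lemma:sign1} and \ref{lemma:sign2}) and defers the binomial normalization to a final division, rather than arguing case by case.
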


\begin{proposition}
    \label{prop:lminus}
      Let $X$ be a simplicial complex, and let $w:X\to \Rea_{>0}$.  Let $0\le \ell<k\le \dim(X)$. Then, for all $\sigma,\tau\in X(k)$,
    \[  
         \sum_{\substack{\eta\in X(\ell):\\ \sigma,\tau\in \Xv{X}{\eta}{k}} }s_{\eta}(\sigma)s_{\eta}(\tau)\cdot (k+1) \cdot \simlapws{k-\ell-1}{\lk(X,\eta)}{w_{\eta}}{-}_{\sigma,\tau}=  (k-\ell)\cdot \simlapw{k}{X}{w}{-}_{\sigma,\tau}.
    \]
\end{proposition}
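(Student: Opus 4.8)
The plan is to verify the identity entrywise, substituting the explicit formula \eqref{eq:lower_sim_laplacian_def} from Lemma \ref{lemma:lap_formulas} and splitting into cases according to $|\sigma\cap\tau|$. The first observation is that if an index $\eta\in X(\ell)$ contributes to the sum on the left, i.e.\ $\sigma,\tau\in\Xv{X}{\eta}{k}$, then $\eta\subseteq\sigma$ and $\eta\subseteq\tau$, so $\eta\subseteq\sigma\cap\tau$; in particular $\sigma\setminus\eta\in\lk(X,\eta)(k-\ell-1)$, hence $\dim(\lk(X,\eta))\ge k-\ell-1$ and all matrices occurring are well-defined, and the sum is empty unless $|\sigma\cap\tau|\ge\ell+1$. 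By definition $\simlapws{k-\ell-1}{\lk(X,\eta)}{w_{\eta}}{-}_{\sigma,\tau}=\simlapw{k-\ell-1}{\lk(X,\eta)}{w_{\eta}}{-}_{\sigma\setminus\eta,\tau\setminus\eta}$, and since $w_\eta(\cdot)=w(\cdot\cup\eta)$, the weight ratios appearing in \eqref{eq:lower_sim_laplacian_def} for $\lk(X,\eta)$ are literally those for $X$: $w_\eta(\sigma\setminus\eta)=w(\sigma)$, $w_\eta(\tau\setminus\eta)=w(\tau)$, and $w_\eta((\sigma\cap\tau)\setminus\eta)=w(\sigma\cap\tau)$. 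Throughout, incidence signs for $\lk(X,\eta)$ are computed with the restriction to $N_X(\eta)$ of the fixed order on $V$.

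\emph{Diagonal entries, $\sigma=\tau$.} Here $s_\eta(\sigma)^2=\binom{k+1}{\ell+1}^{-1}$ and $\simlapws{k-\ell-1}{\lk(X,\eta)}{w_{\eta}}{-}_{\sigma,\sigma}=\sum_{v\in\sigma\setminus\eta}w(\sigma)/w(\sigma\setminus\{v\})$. Summing over the $(\ell+1)$-subsets $\eta$ of $\sigma$ and interchanging the order of summation, each $v\in\sigma$ is counted once for every $\eta$ avoiding $v$, that is $\binom{k}{\ell+1}$ times; since $(k+1)\binom{k+1}{\ell+1}^{-1}\binom{k}{\ell+1}=k-\ell$, the left-hand side equals $(k-\ell)\sum_{v\in\sigma}w(\sigma)/w(\sigma\setminus\{v\})=(k-\ell)\simlapw{k}{X}{w}{-}_{\sigma,\sigma}$, as required.

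\emph{The case $|\sigma\cap\tau|=k$.} Write $\rho=\sigma\cap\tau$, $\sigma=\rho\cup\{a\}$, $\tau=\rho\cup\{b\}$. The contributing $\eta$ are the $(\ell+1)$-subsets of $\rho$, and for each the faces $\sigma\setminus\eta,\tau\setminus\eta$ of $\lk(X,\eta)$ intersect in $\rho\setminus\eta$, a codimension-one face of each, so \eqref{eq:lower_sim_laplacian_def} contributes the weight factor $\sqrt{w(\sigma)w(\tau)}/w(\rho)$ and the sign $(\sigma\setminus\eta:\rho\setminus\eta)(\tau\setminus\eta:\rho\setminus\eta)$. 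This sign bookkeeping is the crux, and the step I expect to be the main obstacle: I must show $s_\eta(\sigma)s_\eta(\tau)$ precisely cancels the discrepancy between the link incidence signs and the ambient ones $(\sigma:\rho)(\tau:\rho)$. Splitting $\{v\in\sigma:v<a\}$ by membership in $\sigma\setminus\eta$ or $\eta$ gives $(\sigma\setminus\eta:\rho\setminus\eta)=(\sigma:\rho)\cdot(-1)^{|\{j\in\eta:j<a\}|}$, and likewise $(\tau\setminus\eta:\rho\setminus\eta)=(\tau:\rho)\cdot(-1)^{|\{j\in\eta:j<b\}|}$. On the other hand, writing $\sigma\setminus\eta=(\rho\setminus\eta)\cup\{a\}$ in the exponent defining $s_\eta(\sigma)$ isolates the term $|\{j\in\eta:j<a\}|$ plus a ``$(\rho\setminus\eta)$-versus-$\eta$'' crossing count independent of $a$; the same count appears in $s_\eta(\tau)$, so it cancels in the product, leaving $s_\eta(\sigma)s_\eta(\tau)=\binom{k+1}{\ell+1}^{-1}(-1)^{|\{j\in\eta:j<a\}|+|\{j\in\eta:j<b\}|}$. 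Multiplying, the signs square to $1$, and each term reduces to $\binom{k+1}{\ell+1}^{-1}(\sigma:\rho)(\tau:\rho)\sqrt{w(\sigma)w(\tau)}/w(\rho)$; summing over the $\binom{k}{\ell+1}$ admissible $\eta$ with the factor $k+1$ again yields the prefactor $k-\ell$ and exactly $(k-\ell)\simlapw{k}{X}{w}{-}_{\sigma,\tau}$.

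\emph{The case $\sigma\ne\tau$, $|\sigma\cap\tau|\le k-1$.} Now the right-hand side vanishes. If $|\sigma\cap\tau|<\ell+1$ the left-hand sum is empty; otherwise, for each $(\ell+1)$-subset $\eta$ of $\sigma\cap\tau$ the faces $\sigma\setminus\eta,\tau\setminus\eta$ of $\lk(X,\eta)$ intersect in $(\sigma\cap\tau)\setminus\eta$, which has at most $k-\ell-2$ elements, so $\simlapw{k-\ell-1}{\lk(X,\eta)}{w_{\eta}}{-}_{\sigma\setminus\eta,\tau\setminus\eta}=0$ by \eqref{eq:lower_sim_laplacian_def}, and the whole sum vanishes. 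Combining the three cases proves the identity. (The companion Proposition \ref{prop:lplus} will follow by the same scheme applied to \eqref{eq:upper_sim_laplacian_def}, the additional ingredients being Lemma \ref{lemma:sign} for the off-diagonal upper-Laplacian signs and the extraction of the diagonal degree term $R_k(X;w)$.)
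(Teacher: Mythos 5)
Your proposal is correct and follows essentially the same route as the paper: entrywise verification via \eqref{eq:lower_sim_laplacian_def}, the identifications $w_\eta(\sigma\setminus\eta)=w(\sigma)$ and $s_\eta(\sigma)^2=\binom{k+1}{\ell+1}^{-1}$, and double counting over the $\binom{k}{\ell+1}$ admissible $\eta$ with the same binomial arithmetic. The only difference is cosmetic: your inline sign bookkeeping in the $|\sigma\cap\tau|=k$ case is exactly a direct proof of the paper's Lemma \ref{lemma:sign2}, which the paper instead isolates (via Lemma \ref{lemma:sign1}) and quotes.
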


First, let us prove Theorem \ref{thm:garland_main} and Corollary \ref{cor:main_garland}, assuming Propositions \ref{prop:lplus} and \ref{prop:lminus}.

\begin{proof}[Proof of Theorem \ref{thm:garland_main}]
    Let $M=\frac{1}{k+1}\left((k-\ell) \simlapw{k}{X}{w}{} +(\ell+1)R_k(X;w)\right)$. 
    By Propositions \ref{prop:lplus} and \ref{prop:lminus}, we have
    \[  
         M_{\sigma,\tau}=\sum_{\substack{\eta\in X(\ell):\\ \sigma,\tau\in \Xv{X}{\eta}{k}} }s_{\eta}(\sigma)s_{\eta}(\tau)\simlapws{k-\ell-1}{\lk(X,\eta)}{w_{\eta}}{}_{\sigma,\tau}
    \]
    for all $\sigma,\tau\in X(k)$. Let $1\le i\le |X(k)|$. By Lemma \ref{lemma:abstract_garland},
    \begin{align*}
        \eigsmallest{i}{M}&\ge \eigsmallest{i}{\bigoplus_{\substack{\eta\in X(\ell):\\\dim(\lk(X,\eta))\ge k-\ell-1}} \simlapws{k-\ell-1}{\lk(X,\eta)}{w_{\eta}}{}}
       \\ &= \eigsmallest{i}{\bigoplus_{\substack{\eta\in X(\ell):\\\dim(\lk(X,\eta))\ge k-\ell-1}} \lapw{k-\ell-1}{\lk(X,\eta)}{w_{\eta}}{}}.
    \end{align*}
    On the other hand, by Lemma \ref{lemma:weyl}, we have
\begin{align*}
(k-\ell)\cdot\eigsmallest{i}{\lapw{k}{X}{w}{}}= (k-\ell)\cdot\eigsmallest{i}{\simlapw{k}{X}{w}{}} &\ge (k+1)\cdot \eigsmallest{i}{M} +(\ell+1)\cdot\eigsmallest{1}{-R_k(X;w)} \\&=  (k+1)\cdot \eigsmallest{i}{M} -(\ell+1)\cdot\Delta_k(X;w).
\end{align*}
    Hence,
    \[
       (k-\ell)\cdot \eigsmallest{i}{\lapw{k}{X}{w}{}} \ge (k+1)\cdot \eigsmallest{i}{\bigoplus_{\substack{\eta\in X(\ell):\\\dim(\lk(X,\eta))\ge k-\ell-1}} \lapw{k-\ell-1}{\lk(X,\eta)}{w_{\eta}}{}} - (\ell+1)\cdot \Delta_k(X;w).
    \]

\end{proof}

\begin{proof}[Proof of Corollary \ref{cor:main_garland}]
    Let \[m=\sum_{\substack{\eta\in X(\ell):\\\dim(\lk(X,\eta))\ge k-\ell-1}} \left| \left\{ \lambda\in {\rm Spec}(\lapw{k-\ell-1}{\lk(X,\eta)}{w_{\eta}}{}):\, \lambda\le \frac{\ell+1}{k+1}\cdot \wmaxdeg{k}{X}{w}\right\}\right|. \]
    If $m\ge |X(k)|$, then $\dim(\tilde{H}_k(X;\Rea))\le m$ trivially. Otherwise, by the definition of $m$, we have
    \[
       \eigsmallest{m+1}{\bigoplus_{\substack{\eta\in X(\ell):\\\dim(\lk(X,\eta))\ge k-\ell-1}} \lapw{k-\ell-1}{\lk(X,\eta)}{w_{\eta}}{}}>\frac{\ell+1}{k+1}\cdot \Delta_k(X;w).
    \]
    Therefore, by Theorem \ref{thm:garland_main},
     \[
       (k-\ell)\cdot\eigsmallest{m+1}{\lapw{k}{X}{w}{}} > (k+1)\cdot \frac{\ell+1}{k+1}\cdot \Delta_k(X;w) - (\ell+1)\cdot \Delta_k(X;w)= 0.
    \]
    So, $\eigsmallest{m+1}{\lapw{k}{X}{w}{}}>0$.
    Thus, by Lemma \ref{lemma:eckmann}, $\dim(\tilde{H}_k(X;\Rea))\le m$, as wanted.
\end{proof}

For the proofs of Propositions \ref{prop:lplus} and \ref{prop:lminus}, we will need the following lemmas.

\begin{lemma}\label{lemma:sign1}
    Let $X$ be a simplicial complex, and let $0\le \ell<k\le \dim(X)$. 
    Let $\eta\in X(\ell)$ and $\tau\in \Xv{X}{\eta}{k}$. Let $u\in \tau\setminus \eta$, and let $\sigma=\tau\setminus\{u\}$. Then,
    \[
        s_{\eta}(\tau)\cdot (\tau\setminus\eta:\sigma\setminus\eta)=\sqrt{\frac{k-\ell}{k+1}} \cdot s_{\eta}(\sigma)\cdot (\tau:\sigma). 
    \]
\end{lemma}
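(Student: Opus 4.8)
The plan is to track the sign functions on both sides by unwinding the definitions of $s_\eta$ and of the incidence coefficient $(\cdot:\cdot)$, and to compare the exponents of $-1$ that appear. Write $\tau\setminus\eta = \{u_1 < u_2 < \cdots < u_{k-\ell}\}$ and suppose $u = u_p$ for some $1\le p\le k-\ell$, so that $\sigma\setminus\eta = (\tau\setminus\eta)\setminus\{u_p\}$. The scalar prefactor $\binom{k+1}{\ell+1}^{-1/2}$ is common to $s_\eta(\tau)$ and $s_\eta(\sigma)$, and since $\binom{k+1}{\ell+1}^{-1/2} = \sqrt{(k-\ell)/(k+1)}\cdot\binom{k}{\ell+1}^{-1/2}$ — because $\binom{k+1}{\ell+1} = \frac{k+1}{k-\ell}\binom{k}{\ell+1}$ — the factor $\sqrt{(k-\ell)/(k+1)}$ on the right is exactly what is needed to reconcile the two binomial normalizations, with $s_\eta(\sigma)$ carrying $\binom{k}{\ell+1}^{-1/2}$ (note $\sigma$ is a $k$-face of $X$, but as a face containing $\eta$ it sits over a $\binom{k}{\ell+1}$-element index set of $\ell$-faces... actually $\sigma\in X(k-1)$ here, wait — $\sigma = \tau\setminus\{u\}$ has $k$ vertices so $\dim\sigma = k-1$, and $s_\eta(\sigma)$ must be read as the analogous expression $\binom{k}{\ell+1}^{-1/2}(-1)^{\#\{(i,j):i\in\sigma\setminus\eta, j\in\eta, i>j\}}$). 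So the identity reduces to the purely combinatorial sign claim
\[
(-1)^{A(\tau)}\cdot(\tau\setminus\eta:\sigma\setminus\eta) = (-1)^{A(\sigma)}\cdot(\tau:\sigma),
\]
where $A(\rho) = \#\{(i,j): i\in\rho\setminus\eta,\ j\in\eta,\ i>j\}$.

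Next I would compute each of the four sign exponents. First, $A(\tau) - A(\sigma) = \#\{j\in\eta : u > j\}$, since removing $u$ from $\tau\setminus\eta$ deletes exactly the pairs $(u,j)$ with $j\in\eta$, $j<u$; call this quantity $b := \#\{j\in\eta: j<u\}$. Second, $(\tau:\sigma) = (-1)^{\#\{v\in\tau: v<u\}}$ by definition, and $\#\{v\in\tau:v<u\} = \#\{j\in\eta:j<u\} + \#\{v\in\tau\setminus\eta: v<u\} = b + (p-1)$. Third, $(\tau\setminus\eta:\sigma\setminus\eta) = (-1)^{\#\{v\in\tau\setminus\eta: v<u\}} = (-1)^{p-1}$. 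Substituting into the reduced claim, the left side contributes $(-1)^{A(\tau)+(p-1)}$ and the right side $(-1)^{A(\sigma) + b + (p-1)}$; since $A(\tau) = A(\sigma)+b$, both sides agree. This is the whole argument; the remaining task is just to present the binomial identity cleanly and to state precisely the convention by which $s_\eta(\sigma)$ is defined for the $(k-1)$-dimensional face $\sigma$ (equivalently, to observe that only the ratio of the two normalization constants matters, and that ratio is $\sqrt{(k-\ell)/(k+1)}$).

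The main obstacle — really the only delicate point — is bookkeeping the definition of $s_\eta$ when its argument has dimension $k-1$ rather than $k$: the excerpt defines $s_\eta(\sigma)$ only for $\sigma\in X_\eta(k)$, so I would either extend the definition in the obvious way (replacing $\binom{k+1}{\ell+1}$ by $\binom{k}{\ell+1}$, i.e. the number of $\ell$-faces of a $(k-1)$-face containing... no: the number of $(\ell)$-faces $\eta'$ with $\eta'\subseteq\sigma$ is $\binom{k}{\ell+1}$) or, more safely, simply phrase the lemma and its proof in terms of the explicit formula $s_\eta(\rho) = c(|\rho|)\cdot(-1)^{A(\rho)}$ and note that $c(k+1)/c(k) = \sqrt{(k-\ell)/(k+1)}$. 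Everything else is a routine sign count; the only thing to be careful about is the ordering convention $i>j$ in the exponent (pairs with the $\eta$-coordinate \emph{smaller}), which is what makes $A(\tau)-A(\sigma)$ equal to $\#\{j\in\eta:j<u\}$ rather than $\#\{j\in\eta:j>u\}$, and this is consistent with the sign convention $(\tau:\sigma)=(-1)^{\#\{v\in\tau:v<u\}}$ used throughout the paper.
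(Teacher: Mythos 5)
Your proof is correct and follows essentially the same route as the paper: the same split of the exponent count (showing $A(\tau)+\#\{v\in\tau\setminus\eta:v<u\}=A(\sigma)+\#\{v\in\tau:v<u\}$) combined with the binomial ratio $\binom{k+1}{\ell+1}=\frac{k+1}{k-\ell}\binom{k}{\ell+1}$. Your reading of $s_{\eta}(\sigma)$ for the $(k-1)$-dimensional face $\sigma$, namely $\binom{k}{\ell+1}^{-1/2}(-1)^{A(\sigma)}$, is exactly the convention the paper uses implicitly, so there is no gap.
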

\begin{proof}
Note that
\begin{align*}
|\{(i,j):\, & i\in \tau\setminus \eta,\, j\in \eta,\, i>j\}| +|\{j\in \tau\setminus \eta:\, j<u\}|
\\
&= |\{(i,j):\, i\in \sigma\setminus \eta,\, j\in \eta,\, i>j\}| +|\{j\in \eta:\, u>j\}|+|\{j\in \tau\setminus \eta:\, j<u\}| 
\\
&= |\{(i,j):\, i\in \sigma\setminus \eta,\, j\in \eta,\, i>j\}| +|\{j\in \tau:\, j<u\}| .
\end{align*}
Therefore, 
      \begin{align*}
     s_{\eta}(\tau)\cdot (\tau\setminus\eta:\sigma\setminus\eta) &= \binom{k+1}{\ell+1}^{-1/2}\cdot(-1)^{|\{(i,j):\, i\in \tau\setminus \eta,\, j\in \eta,\,  i>j\}|}\cdot (-1)^{|\{j\in \tau\setminus\eta:\, j<u\}|}
     \\
    &=\sqrt{\frac{k-\ell}{k+1}}\cdot \binom{k}{\ell+1}^{-1/2}\cdot  (-1)^{|\{(i,j):\, i\in\sigma\setminus \eta,\, j\in \eta,\, i>j\}|}\cdot (-1)^{|\{j\in \tau:\, j<u\}|}
    \\
    &= \sqrt{\frac{k-\ell}{k+1}} \cdot s_{\eta}(\sigma) \cdot (\tau:\sigma).
   \end{align*}
\end{proof}

\begin{lemma}\label{lemma:sign2}
    Let $X$ be a simplicial complex, and let $0\le \ell<k\le \dim(X)$. 
    Let $\eta\in X(\ell)$ and $\sigma,\tau \in \Xv{X}{\eta}{k}$ such that $|\sigma\cap \tau|=k$. Then,
    \[
        (\sigma:\sigma\cap \tau)(\tau:\sigma\cap \tau)=  \binom{k+1}{\ell+1}\cdot s_{\eta}(\sigma)s_{\eta}(\tau)\cdot (\sigma\setminus\eta: (\sigma\cap \tau)\setminus\eta)\cdot (\tau\setminus\eta: (\sigma\cap \tau)\setminus\eta).
    \]
\end{lemma}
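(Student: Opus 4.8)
The plan is to deduce this identity from Lemma~\ref{lemma:sign1} by applying it twice, ``once from each side''.

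First I would unpack the hypotheses. Put $\rho=\sigma\cap\tau$. Since $\sigma,\tau\in\Xv{X}{\eta}{k}$, both contain $\eta$, so $\eta\subseteq\rho$; and $|\rho|=k$ by assumption, so $\rho\in X(k-1)$ and there are distinct vertices $u\in\sigma\setminus\rho$ and $u'\in\tau\setminus\rho$, neither of which lies in $\eta$ (as $\eta\subseteq\rho$). In particular $\rho\in\Xv{X}{\eta}{k-1}$, $\sigma=\rho\cup\{u\}$, and $\tau=\rho\cup\{u'\}$, so Lemma~\ref{lemma:sign1} applies verbatim to the pair $(\sigma,\rho)$ with deleted vertex $u$, and to the pair $(\tau,\rho)$ with deleted vertex $u'$, giving
\begin{align*}
s_\eta(\sigma)\cdot(\sigma\setminus\eta:\rho\setminus\eta) &= \sqrt{\tfrac{k-\ell}{k+1}}\cdot s_\eta(\rho)\cdot(\sigma:\rho),\\
s_\eta(\tau)\cdot(\tau\setminus\eta:\rho\setminus\eta) &= \sqrt{\tfrac{k-\ell}{k+1}}\cdot s_\eta(\rho)\cdot(\tau:\rho).
\end{align*}

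Next I would multiply these two identities. On the right-hand side the factor $s_\eta(\rho)^2=\binom{k}{\ell+1}^{-1}$ appears, and using the elementary identity $\binom{k+1}{\ell+1}=\frac{k+1}{k-\ell}\binom{k}{\ell+1}$ one gets $\frac{k-\ell}{k+1}\,s_\eta(\rho)^2=\binom{k+1}{\ell+1}^{-1}$. Hence
\[
s_\eta(\sigma)s_\eta(\tau)\cdot(\sigma\setminus\eta:\rho\setminus\eta)(\tau\setminus\eta:\rho\setminus\eta)=\binom{k+1}{\ell+1}^{-1}(\sigma:\rho)(\tau:\rho),
\]
and multiplying through by $\binom{k+1}{\ell+1}$ and recalling $\rho=\sigma\cap\tau$ gives exactly the asserted identity.

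I do not expect any real obstacle here; the only points requiring (routine) care are the observation that $\rho$ is obtained from each of $\sigma$ and $\tau$ by deleting a single vertex \emph{not lying in $\eta$} (so that Lemma~\ref{lemma:sign1} is applicable with no change to the normalization of $s_\eta$), and the binomial bookkeeping. Alternatively, one could prove the statement from scratch by writing each of the four sign factors as a power of $-1$, reducing the claim to a congruence of exponents modulo $2$, and verifying it by splitting $\sigma=(\sigma\setminus\eta)\sqcup\eta$ and $\sigma\setminus\eta=(\rho\setminus\eta)\sqcup\{u\}$ (and similarly for $\tau$); the common count $|\{(i,j):i\in\rho\setminus\eta,\ j\in\eta,\ i>j\}|$ then occurs twice and cancels modulo $2$.
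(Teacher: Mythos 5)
Your proof is correct and is essentially the paper's own argument: the paper likewise applies Lemma \ref{lemma:sign1} to each of $\sigma$ and $\tau$ relative to $\sigma\cap\tau$, multiplies the two identities, and uses $s_{\eta}(\sigma\cap\tau)^2=\binom{k}{\ell+1}^{-1}$ together with $\frac{k-\ell}{k+1}\binom{k}{\ell+1}^{-1}=\binom{k+1}{\ell+1}^{-1}$. Your extra care about the deleted vertices lying outside $\eta$ and about the dimension-dependent normalization of $s_{\eta}$ matches what the paper leaves implicit.
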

\begin{proof}
By Lemma \ref{lemma:sign1}, we have
\begin{align*}
    s_{\eta}(\sigma)s_{\eta}(\tau) &\cdot (\sigma\setminus\eta: (\sigma\cap \tau)\setminus\eta)\cdot (\tau\setminus\eta: (\sigma\cap \tau)\setminus\eta)
    = \frac{k-\ell}{k+1}\cdot s_{\eta}(\sigma\cap\tau)^2 \cdot (\sigma:\sigma\cap \tau)(\tau:\sigma\cap \tau)
    \\
    &=\frac{k-\ell}{k+1}\cdot \binom{k}{\ell+1}^{-1}\cdot (\sigma:\sigma\cap \tau)(\tau:\sigma\cap \tau)
    =\binom{k+1}{\ell+1}^{-1}\cdot (\sigma:\sigma\cap \tau)(\tau:\sigma\cap \tau),
\end{align*}
as required.
\end{proof}

For a finite set $A$ and $n\ge 0$, we denote by $\binom{A}{n}$ the family of all subsets of $A$ of size $n$.

\begin{proof}[Proof of Proposition \ref{prop:lplus}]
    For $\eta\in X(\ell)$ such that $\dim(\lk(X,\eta))\ge k-\ell-1$, we denote $L_{\eta}^+=\simlapws{k-\ell-1}{\lk(X,\eta)}{w_{\eta}}{+}$. Note that, for $\sigma\in \Xv{X}{\eta}{k}$, we have $N_{\lk(X,\eta)}(\sigma\setminus\eta)=N_X(\sigma)$, $s_{\eta}(\sigma)^2=\binom{k+1}{\ell+1}^{-1}$, and $w_{\eta}(\sigma\setminus \eta)=w(\sigma)$.
    In addition, note that, for $\sigma,\tau\in \Xv{X}{\eta}{k}$, we have $|\sigma\cap \tau|=k$ if and only if $|(\sigma\setminus\eta)\cap (\tau\setminus\eta)|=k-\ell-1$, and that  $\sigma\cup\tau\in X(k+1)$ if and only if $(\sigma\cup \tau)\setminus \eta\in \lk(X,\eta)(k-\ell)$. Moreover, by Lemma \ref{lemma:sign2}, 
       \[
       \binom{k+1}{\ell+1} s_{\eta}(\sigma)s_{\eta}(\tau)(\sigma\setminus\eta:\, (\sigma\cap\tau)\setminus\eta)(\tau\setminus\eta:\, (\sigma\cap\tau)\setminus\eta)= (\sigma:\sigma\cap \tau)(\tau:\sigma\cap \tau).
   \]
    Therefore, by \eqref{eq:upper_sim_laplacian_def},
    \begin{align*}
       &\binom{k+1}{\ell+1}\cdot s_{\eta}(\sigma)s_{\eta}(\tau)\cdot (L_{\eta}^+)_{\sigma,\tau} \\&= \begin{cases}
             \sum_{u\in N_X(\sigma)} \frac{w(\sigma\cup\{u\})}{w(\sigma)} & \text{if } \sigma=\tau,\\
            - (\sigma:\sigma\cap \tau)(\tau:\sigma\cap \tau) \frac{w(\sigma\cup \tau)}{\sqrt{w(\sigma)w(\tau)}} & \text{if } |\sigma\cap \tau|=k,\, \sigma\cup \tau\in X(k+1),\\
            0 & \text{otherwise,}
        \end{cases}
    \end{align*}
    for all $\sigma,\tau\in\Xv{X}{\eta}{k}$.

Note that, for every $\sigma\in X(k)$, $|\{\eta\in X(\ell):\, \sigma\in \Xv{X}{\eta}{k}\}|=\left|\binom{\sigma}{\ell+1}\right|=\binom{k+1}{\ell+1}$. Similarly, for $\sigma,\tau\in X(k)$ such that $|\sigma\cap \tau|=k$, $|\{\eta\in X(\ell):\, \sigma,\tau\in \Xv{X}{\eta}{k}\}|=\left|\binom{\sigma\cap \tau}{\ell+1}\right|=\binom{k}{\ell+1}$. Therefore,
    \begin{align*}
        &\binom{k+1}{\ell+1}\cdot \sum_{\substack{\eta\in X(\ell):\\ \sigma,\tau\in \Xv{X}{\eta}{k}} } s_{\eta}(\sigma)s_{\eta}(\tau)\cdot (L_{\eta}^+)_{\sigma,\tau} \\ &= \begin{cases}
            \binom{k+1}{\ell+1}\cdot \sum_{u\in N_X(\sigma)} \frac{w(\sigma\cup\{u\})}{w(\sigma)} & \text{if } \sigma=\tau,\\
            -\binom{k}{\ell+1}\cdot (\sigma:\sigma\cap \tau)(\tau:\sigma\cap \tau) \frac{w(\sigma\cup \tau)}{\sqrt{w(\sigma)w(\tau)}} & \text{if } |\sigma\cap \tau|=k,\, \sigma\cup \tau\in X(k+1),\\
            0 & \text{otherwise,}
        \end{cases}
    \end{align*}
    for all $\sigma,\tau\in X(k)$. By \eqref{eq:upper_sim_laplacian_def} and \eqref{eq:error_term_def}, we obtain, for all $\sigma,\tau\in X(k)$, 
    \[
       \binom{k+1}{\ell+1}\cdot \sum_{\substack{\eta\in X(\ell):\\ \sigma,\tau\in \Xv{X}{\eta}{k}} } s_{\eta}(\sigma)s_{\eta}(\tau)\cdot (L_{\eta}^+)_{\sigma,\tau}=  \binom{k}{\ell+1} \cdot \simlapw{k}{X}{w}{+}_{\sigma,\tau} + \binom{k}{\ell}R_k(X;w)_{\sigma,\tau}.
    \]
    Multiplying both sides of the equation by $(\ell+1)\cdot \binom{k}{\ell}^{-1}$, we obtain
    \[
       \sum_{\substack{\eta\in X(\ell):\\ \sigma,\tau\in \Xv{X}{\eta}{k}} } s_{\eta}(\sigma)s_{\eta}(\tau)\cdot (k+1)\cdot (L_{\eta}^+)_{\sigma,\tau}=  (k-\ell) \cdot \simlapw{k}{X}{w}{+}_{\sigma,\tau} + (\ell+1)\cdot R_k(X;w)_{\sigma,\tau},
    \]
    for all $\sigma,\tau\in X(k)$.

\end{proof}

\begin{proof}[Proof of Proposition \ref{prop:lminus}]
    
 For $\eta\in X(\ell)$ such that $\dim(\lk(X,\eta))\ge k-\ell-1$, we denote $L_{\eta}^{-}=\simlapws{k-\ell-1}{\lk(X,\eta)}{w_{\eta}}{-}$. 
   Note that $w_{\eta}(\sigma\setminus\eta)=w(\sigma)$ and $s_{\eta}(\sigma)^2= \binom{k+1}{\ell+1}^{-1}$ for all $\sigma\in \Xv{X}{\eta}{k}$. Moreover, for all $\sigma,\tau\in \Xv{X}{\eta}{k}$, we have $|\sigma\cap \tau|=k$ if and only if $|(\sigma\setminus\eta)\cap(\tau\setminus\eta)|=k-\ell-1$. By Lemma \ref{lemma:sign2}, we have in this case
   \[
       \binom{k+1}{\ell+1} s_{\eta}(\sigma)s_{\eta}(\tau)(\sigma\setminus\eta:\, (\sigma\cap\tau)\setminus\eta)(\tau\setminus\eta:\, (\sigma\cap\tau)\setminus\eta)= (\sigma:\sigma\cap \tau)(\tau:\sigma\cap \tau).
   \]
   Hence, by \eqref{eq:lower_sim_laplacian_def}, we obtain
    \[
      \binom{k+1}{\ell+1} s_{\eta}(\sigma)s_{\eta}(\tau) (L_{\eta}^{-})_{\sigma,\tau}= \begin{cases}
            \sum_{v\in \sigma\setminus\eta} \frac{w(\sigma)}{w(\sigma\setminus\{v\})} & \text{if } \sigma=\tau,\\
            (\sigma:\sigma\cap \tau)(\tau:\sigma\cap \tau)\frac{\sqrt{w(\sigma)w(\tau)}}{w(\sigma\cap \tau)} & \text{if } |\sigma\cap \tau|=k,\\
            0 & \text{otherwise,}
        \end{cases}
    \]
    for all $\sigma,\tau\in X_{\eta}(k)$.
    Note that, for every $\sigma\in X(k)$, $\{\eta\in X(\ell):\, \sigma\in \Xv{X}{\eta}{k}\}=\binom{\sigma}{\ell+1}$, and
    \[
        \sum_{\eta\in \binom{\sigma}{\ell+1}} \sum_{v\in \sigma\setminus\eta} \frac{w(\sigma)}{w(\sigma\setminus\{v\})}= \binom{k}{\ell+1}\cdot \sum_{v\in \sigma} \frac{w(\sigma)}{w(\sigma\setminus\{v\})}.
    \]
    Furthermore, for $\sigma,\tau\in X(k)$ such that $|\sigma\cap \tau|=k$, $|\{\eta\in X(\ell):\, \sigma,\tau\in \Xv{X}{\eta}{k}\}|=\binom{k}{\ell+1}$. 
    Therefore,
    \[
      \binom{k+1}{\ell+1}\cdot \sum_{\substack{\eta\in X(\ell):\\ \sigma,\tau\in \Xv{X}{\eta}{k}}} s_{\eta}(\sigma)s_{\eta}(\tau) (L_{\eta}^{-})_{\sigma,\tau} = \begin{cases}
             \binom{k}{\ell+1}\cdot \sum_{v\in \sigma} \frac{w(\sigma)}{w(\sigma\setminus\{v\})} & \text{if } \sigma=\tau,\\
            \binom{k}{\ell+1}\cdot (\sigma:\sigma\cap \tau)(\tau:\sigma\cap \tau)\frac{\sqrt{w(\sigma)w(\tau)}}{w(\sigma\cap \tau)} & \text{if } |\sigma\cap \tau|=k,\\
            0 & \text{otherwise,}
            \end{cases}
    \]
    for all $\sigma,\tau\in X(k)$.
    Hence, by \eqref{eq:lower_sim_laplacian_def},
    \[
        \binom{k+1}{\ell+1}\cdot \sum_{\substack{\eta\in X(\ell):\\ \sigma,\tau\in \Xv{X}{\eta}{k}}} s_{\eta}(\sigma)s_{\eta}(\tau) (L_{\eta}^{-})_{\sigma,\tau}   =  \binom{k}{\ell+1}\cdot \simlapw{k}{X}{w}{-}_{\sigma,\tau},
    \]
    for all $\sigma,\tau\in X(k)$. Multiplying both sides of the equation by $(\ell+1)\binom{k}{\ell}^{-1}$, we obtain
    \[
         \sum_{\substack{\eta\in X(\ell):\\ \sigma,\tau\in \Xv{X}{\eta}{k}}} s_{\eta}(\sigma)s_{\eta}(\tau) \cdot (k+1)\cdot (L_{\eta}^{-})_{\sigma,\tau}   =  (k-\ell)\cdot \simlapw{k}{X}{w}{-}_{\sigma,\tau}, 
    \]
    as wanted.
\end{proof}

\subsection{Upper bounds}\label{sec:main:upper}

Our arguments can also be used to prove upper bounds on the eigenvalues of Laplacian operators on simplicial complexes, similar to the lower bounds in Theorem \ref{thm:garland_main}. We state the next result in terms of the upper Laplacian, but an analogous bound for the total Laplacian can be proven similarly. 

For a simplicial complex $X$, a weight function $w: X\to \Rea_{>0}$, and $k\ge 0$, let 
\[
\wmindeg{k}{X}{w}=\min_{\sigma\in X(k)} \sum_{v\in N_X(\sigma)} \frac{w(\sigma\cup\{v\})}{w(\sigma)}.
\]
Note that $\wmindeg{k}{X}{w}= \eigsmallest{1}{R_k(X;w)}$.

\begin{proposition}\label{prop:garland_upper}
     Let $X$ be a simplicial complex, and let $w:X\to\Rea_{>0}$. Let $0\le \ell<k\le \dim(X)-1$. Then, for all $1\le i\le |X(k)|$, 
    \[
       (k-\ell)\cdot \eiglargest{i}{\lapw{k}{X}{w}{+}} \le (k+1)\cdot \eiglargest{i}{\bigoplus_{\substack{\eta\in X(\ell):\\\dim(\lk(X,\eta))\ge k-\ell-1}} \lapw{k-\ell-1}{\lk(X,\eta)}{w_{\eta}}{+}} - (\ell+1)\wmindeg{k}{X}{w}.
    \]
\end{proposition}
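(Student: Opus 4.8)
The plan is to mirror, essentially verbatim, the argument used to prove Theorem \ref{thm:garland_main}, but keeping only the upper-Laplacian part and applying the ``upper'' half of each inequality (the $\eiglargest{}{}$ bounds in Theorem \ref{thm:cauchy_plus}, Lemma \ref{lemma:weyl}, and Lemma \ref{lemma:abstract_garland}) instead of the ``lower'' half. Concretely, set
\[
    M=\frac{1}{k+1}\left((k-\ell)\,\simlapw{k}{X}{w}{+}+(\ell+1)\,R_k(X;w)\right).
\]
By Proposition \ref{prop:lplus} (which is exactly the identity needed here, since only $\simlapw{k}{X}{w}{+}$ and $R_k(X;w)$ appear), we have for all $\sigma,\tau\in X(k)$
\[
    M_{\sigma,\tau}=\sum_{\substack{\eta\in X(\ell):\\ \sigma,\tau\in \Xv{X}{\eta}{k}}} s_{\eta}(\sigma)s_{\eta}(\tau)\,\simlapws{k-\ell-1}{\lk(X,\eta)}{w_{\eta}}{+}_{\sigma,\tau},
\]
and recall $\sum_{\eta:\,\sigma\in\Xv{X}{\eta}{k}} s_\eta(\sigma)^2=1$ for every $\sigma\in X(k)$. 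Hence Lemma \ref{lemma:abstract_garland} applies with $\mi=X(k)$, $\mi_\eta=\Xv{X}{\eta}{k}$, and $M_\eta=\simlapws{k-\ell-1}{\lk(X,\eta)}{w_{\eta}}{+}$ (restricting to those $\eta\in X(\ell)$ with $\dim(\lk(X,\eta))\ge k-\ell-1$, since $\Xv{X}{\eta}{k}=\emptyset$ otherwise), giving for all $1\le i\le |X(k)|$
\[
    \eiglargest{i}{M}\le \eiglargest{i}{\bigoplus_{\substack{\eta\in X(\ell):\\\dim(\lk(X,\eta))\ge k-\ell-1}} \simlapws{k-\ell-1}{\lk(X,\eta)}{w_{\eta}}{+}}=\eiglargest{i}{\bigoplus_{\substack{\eta\in X(\ell):\\\dim(\lk(X,\eta))\ge k-\ell-1}} \lapw{k-\ell-1}{\lk(X,\eta)}{w_{\eta}}{+}},
\]
using that $\simlapws{k-\ell-1}{\lk(X,\eta)}{w_{\eta}}{+}$ and $\lapw{k-\ell-1}{\lk(X,\eta)}{w_{\eta}}{+}$ have the same spectrum.

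To finish, write $(k+1)M=(k-\ell)\simlapw{k}{X}{w}{+}+(\ell+1)R_k(X;w)$, i.e. $(k-\ell)\simlapw{k}{X}{w}{+}=(k+1)M-(\ell+1)R_k(X;w)$, and apply the upper Weyl inequality from Lemma \ref{lemma:weyl} to $A=(k+1)M$ and $B=-(\ell+1)R_k(X;w)$:
\[
    (k-\ell)\eiglargest{i}{\lapw{k}{X}{w}{+}}=(k-\ell)\eiglargest{i}{\simlapw{k}{X}{w}{+}}\le (k+1)\eiglargest{i}{M}+(\ell+1)\eiglargest{1}{-R_k(X;w)}.
\]
Since $R_k(X;w)$ is diagonal with nonnegative entries whose minimum is $\wmindeg{k}{X}{w}$, we have $\eiglargest{1}{-R_k(X;w)}=-\eigsmallest{1}{R_k(X;w)}=-\wmindeg{k}{X}{w}$. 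Combining with the bound on $\eiglargest{i}{M}$ yields exactly
\[
    (k-\ell)\eiglargest{i}{\lapw{k}{X}{w}{+}}\le (k+1)\eiglargest{i}{\bigoplus_{\substack{\eta\in X(\ell):\\\dim(\lk(X,\eta))\ge k-\ell-1}} \lapw{k-\ell-1}{\lk(X,\eta)}{w_{\eta}}{+}}-(\ell+1)\wmindeg{k}{X}{w}.
\]

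I do not anticipate a genuine obstacle: every ingredient is already in place. The only points requiring a small amount of care are (i) checking that the hypothesis $k\le\dim(X)-1$ is used precisely to guarantee $\simlapw{k}{X}{w}{+}$ is the ``true'' upper Laplacian (for $k=\dim X$ it is defined to be $0$, and the statement would degenerate), and that $\dim(\lk(X,\eta))\ge k-\ell-1$ is the correct index set so that the relevant links actually carry a $(k-\ell-1)$-dimensional upper Laplacian; and (ii) confirming that Proposition \ref{prop:lplus}, as stated, already isolates exactly the upper part plus the diagonal error term $R_k(X;w)$, so no new identity is needed — in particular the lower-Laplacian identity (Proposition \ref{prop:lminus}) plays no role here. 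A one-line remark could note that the analogous statement for the total Laplacian follows by instead adding Propositions \ref{prop:lplus} and \ref{prop:lminus} and repeating the argument with $\simlapw{k}{X}{w}{}$ in place of $\simlapw{k}{X}{w}{+}$.
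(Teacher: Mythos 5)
Your proposal is correct and is exactly the argument the paper has in mind: the paper omits the proof, stating it is ``essentially the same as that of Theorem \ref{thm:garland_main}'', and your write-up supplies precisely that adaptation — Proposition \ref{prop:lplus} to identify $M=\frac{1}{k+1}\bigl((k-\ell)\simlapw{k}{X}{w}{+}+(\ell+1)R_k(X;w)\bigr)$ with the weighted sum of link Laplacians, the $\eiglargest{i}{\cdot}$ half of Lemma \ref{lemma:abstract_garland}, and the upper Weyl inequality with $\eiglargest{1}{-R_k(X;w)}=-\wmindeg{k}{X}{w}$. The side remarks (restricting to $\eta$ with $\dim(\lk(X,\eta))\ge k-\ell-1$ since $\Xv{X}{\eta}{k}=\emptyset$ otherwise, and similarity of $\simlapw{k-\ell-1}{\lk(X,\eta)}{w_\eta}{+}$ with $\lapw{k-\ell-1}{\lk(X,\eta)}{w_\eta}{+}$) are also handled correctly.
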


The proof is essentially the same as that of Theorem \ref{thm:garland_main}, so we omit the details. The case $i=1$ of Proposition \ref{prop:garland_upper} was observed (in the special case of weighted pure complexes) by Papikian in \cite{papikian2016garland} and by Gundert and Wagner in \cite{gundert2016eigenvalues}.

\subsection{Application to different choices of weight functions}\label{sec:main:special}

\subsubsection{The unweighted case}\label{sec:main:unweighted}

Let $X$ be a simplicial complex, and let $0\le k\le \dim(X)$. Let $w: X\to \Rea_{>0}$ be the constant function $w\equiv 1$. We denote $L_k^+(X)=\lapw{k}{X}{w}{+}$ and $L_k(X)=\lapw{k}{X}{w}{}$. Note that, for $0\le \ell<k$ and $\eta\in X(\ell)$, $w_{\eta}\equiv 1$, so $\lapw{k-\ell-1}{\lk(X,\eta)}{w_{\eta}}{+}=L_{k-\ell-1}^+(\lk(X,\eta))$ and $\lapw{k-\ell-1}{\lk(X,\eta)}{w_{\eta}}{}=L_{k-\ell-1}(\lk(X,\eta))$.  
For $\sigma\in X(k)$, let $\deg(\sigma)= |\{\tau\in X(k+1):\, \sigma\subset \tau\}|$. Let $\Delta_k(X)= \max_{\sigma\in X(k)} \deg(\sigma)$, and $\delta_k(X)=\min_{\sigma\in X(k)} \deg(\sigma)$. Note that $\wmaxdeg{k}{X}{w}=\Delta_k(X)$ and $\wmindeg{k}{X}{w}=\delta_k(X)$. Therefore, by Theorem \ref{thm:garland_main} and Proposition \ref{prop:garland_upper}, for all $1\le i\le |X(k)|$, 
    \begin{equation}\label{eq:unweighted_garland}
       (k-\ell)\cdot \eigsmallest{i}{L_k(X)} \ge (k+1)\cdot \eigsmallest{i}{\bigoplus_{\substack{\eta\in X(\ell):\\\dim(\lk(X,\eta))\ge k-\ell-1}} L_{k-\ell-1}(\lk(X,\eta))} - (\ell+1)\Delta_k(X),
    \end{equation}
    and
    \[
       (k-\ell)\cdot \eiglargest{i}{L_k^+(X)} \le (k+1)\cdot \eiglargest{i}{\bigoplus_{\substack{\eta\in X(\ell):\\\dim(\lk(X,\eta))\ge k-\ell-1}} L_{k-\ell-1}^+(\lk(X,\eta))} -(\ell+1)\delta_k(X). 
    \]
Moreover, by Corollary \ref{cor:main_garland}, 
\[
   \dim(\tilde{H}_k(X;\Rea)) \le \sum_{\substack{\eta\in X(\ell):\\\dim(\lk(X,\eta))\ge k-\ell-1}} \left| \left\{ \lambda\in {\rm Spec}(L_{k-\ell-1}(\lk(X,\eta)):\, \lambda\le \frac{\ell+1}{k+1}\cdot \Delta_k(X)\right\}\right|.
\]
Let us note that the $i=1$ case of \eqref{eq:unweighted_garland} was first observed by Parzanchevski, Rosenthal, and Tessler in \cite{parzanchevski2016isoperimetric}.

\subsubsection{Weighted pure simplicial complexes}\label{sec:main:weighted}

Recall that a simplicial complex $X$ is called a pure $d$-dimensional complex if all its maximal faces have dimension $d$.  Let $X$ be a pure $d$-dimensional simplicial complex, and let $0\le k\le d$. Recall that we denote $\tilde{L}_k^+(X)=\lapw{k}{X}{w}{+}$ and $\tilde{L}_k(X)=\lapw{k}{X}{w}{}$, where $w:X\to \Rea_{>0}$ is defined by
\[
    w(\sigma)= |\{\tau\in X(d):\, \sigma\subset \tau\}|.
\]
That is, $w(\sigma)$ is the number of maximal faces of $X$ containing $\sigma$. Note that, since $X$ is pure, $w(\sigma)>0$ for all $\sigma\in X$. Moreover, for $0\le \ell<k$ and $\eta\in X(\ell)$, $\lk(X,\eta)$ is a pure $(d-\ell-1)$-dimensional complex, and $w_{\eta}(\sigma)=|\{\tau\in X(d):\, \sigma\cup\eta\subset\tau\}|=|\{\tau'\in \lk(X,\eta)(d-\ell-1):\, \sigma\subset\tau'\}|$ for all $\sigma\in \lk(X,\eta)$. Hence, $\lapw{k-\ell-1}{\lk(X,\eta)}{w_{\eta}}{+}=\tilde{L}_{k-\ell-1}^+(\lk(X,\eta))$ and
$\lapw{k-\ell-1}{\lk(X,\eta)}{w_{\eta}}{}=\tilde{L}_{k-\ell-1}(\lk(X,\eta))$. In addition, we have, for all $k\ge 0$ and $\sigma\in X(k)$,
\[
\sum_{v\in N_X(\sigma)} \frac{w(\sigma\cup\{v\})}{w(\sigma)}=\frac{1}{w(\sigma)}\cdot \sum_{\substack{\tau\in X(d):\\ \sigma\subset\tau}}\,\sum_{v\in \tau\setminus\sigma}1= \frac{1}{w(\sigma)}\cdot w(\sigma)\cdot (d-k)= d-k.
\]
Thus, $\Delta_k(X;w)=\delta_k(X;w)=d-k$.
 Therefore, by Theorem \ref{thm:garland_main}, for all $0\le \ell<k\le d$ and $1\le i\le |X(k)|$, 
    \[
       (k-\ell)\cdot \eigsmallest{i}{\tilde{L}_k(X)} \ge (k+1)\cdot \eigsmallest{i}{\bigoplus_{\eta\in X(\ell)} \tilde{L}_{k-\ell-1}(\lk(X,\eta))} - (\ell+1)(d-k),
    \]
    thus proving Theorem \ref{thm:garland_plus}. Similarly, by Proposition \ref{prop:garland_upper}, 
    \[
       (k-\ell)\cdot \eiglargest{i}{\tilde{L}_k^+(X)} \le (k+1)\cdot \eiglargest{i}{\bigoplus_{\eta\in X(\ell)} \tilde{L}_{k-\ell-1}^+(\lk(X,\eta))} -(\ell+1)(d-k). 
    \]
Finally, by Corollary \ref{cor:main_garland}, 
\[
   \dim(\tilde{H}_k(X;\Rea)) \le \sum_{\eta\in X(\ell)} \left| \left\{ \lambda\in {\rm Spec}(\tilde{L}_{k-\ell-1}(\lk(X,\eta))):\, \lambda\le  \frac{(\ell+1)(d-k)}{k+1}\right\}\right|,
\]
proving Corollary \ref{cor:garland_plus}.

\bibliographystyle{abbrv}
\bibliography{main}

\end{document}